\newtheorem{theorem}{Theorem}
\newtheorem{corollary}[theorem]{Corollary}
\newtheorem{lemma}[theorem]{Lemma}
\def\({\left(}
\def\){\right)}
\def\bder{\text{\cancel{$\partial$}}} 
\begin{document}
\title[Hardy-Leray inequality]{Sharp Hardy-Leray inequality for three-dimensional solenoidal fields with axisymmetric swirl}

\author[N. Hamamoto \and F. Takahashi]{Naoki Hamamoto \and Futoshi Takahashi}

\address{Department of Mathematics, Osaka City University \\
3-3-138 Sugimoto, Sumiyoshi-ku, Osaka 558-8585, Japan}
\email{yhjyoe@yahoo.co.jp {\rm (N.Hamamoto)}} 
\email{futoshi@sci.osaka-cu.ac.jp {\rm (F.Takahashi)}}

\date{\today}

\begin{abstract}
In this paper, we prove Hardy-Leray inequality for three-dimensional solenoidal (i.e., divergence-free) fields with the best constant. 
To derive the best constant, we impose the axisymmetric condition only on the swirl components. 
This partially complements the former work by O. Costin and V. Maz'ya \cite{Costin-Mazya}
on the sharp Hardy-Leray inequality for axisymmetric divergence-free fields.   
\end{abstract}
\subjclass[2010]{Primary 35A23; Secondary 26D10.}
\keywords{Hardy-Leray inequality, solenoidal fields, swirl, axisymmetry.}

\maketitle

\section{Introduction}

Let $N\ge3$ be an integer and $\gamma\in\mathbb{R}$ be a real number.
In what follows, 
$\mathcal{D}_\gamma(\mathbb{R}^N)^N$ denotes the set of all smooth vector fields
${\bm u}:\mathbb{R}^N \to \mathbb{R}^N$, ${\bm u}({\bm x})=\big(u_1({\bm x}),u_2({\bm x}),\cdots,u_N({\bm x})\big)$ for ${\bm x}=(x_1,x_2,\cdots,x_N)$
with compact support such that ${\bm u}({\bm 0})={\bm 0}$ if $\gamma\le1-\frac{N}{2}$. 
Then, the Hardy-Leray inequality with weight $\gamma\in\mathbb{R}$ is given by
\begin{equation}
\label{Leray}
  	\(\gamma + \tfrac{N}{2}-1 \)^2 \int_{\mathbb{R}^N} \frac{|{\bm u}|^2}{|{\bm x}|^2}|{\bm x}|^{2\gamma}dx \le \int_{\mathbb{R}^N}|\nabla {\bm u}|^2|{\bm x}|^{2\gamma}dx
\end{equation}
for all ${\bm u}\in\mathcal{D}_\gamma(\mathbb{R}^N)^N$, where the constant $\(\gamma + \frac{N}{2}-1 \)^2$ is known to be sharp. 
This was proved for $N=3,\ \gamma=0$ by J. Leray \cite{Leray} along his study on the Navier-Stokes equations, 
as an $N$-dimensional generalization of the one-dimensional inequality by H. Hardy \cite{Hardy}.
In the context of hydrodynamics,
it is an interesting problem whether the value of the optimal constant increases by imposing ${\bm u}$ to be solenoidal, 
i.e., ${\rm div}\hspace{0.1em}{\bm u}=0$.
Costin and Maz'ya \cite{Costin-Mazya} obtained a positive answer in every dimension $N\ge3$, under the additional assumption of axisymmetry. 

Hereafter let us restrict ourselves to the case $N=3$.
Main result in \cite{Costin-Mazya} in the three dimensional case reads as follows:

\begin{theorem}
[O. Costin and V. Maz'ya \cite{Costin-Mazya}]
\label{Costin-Mazya} 
Let $\gamma \in\mathbb{R}$ and let ${\bm u} \in \mathcal{D}_\gamma(\mathbb{R}^3)^3$ be an axisymmetric solenoidal vector field. 
Then
\[
	C_{\gamma} \int_{\mathbb{R}^3} \frac{|{\bm u}|^2}{|{\bm x}|^2} |{\bm x}|^{2\gamma} dx 
	\le \int_{\mathbb{R}^3} |\nabla {\bm u}|^2 |{\bm x}|^{2\gamma} dx
\] 
holds with the sharp constant 
$C_{\gamma} = \begin{cases}
	\( \gamma + \frac{1}{2}\)^2 \frac{4 + \(\gamma - \frac{3}{2} \)^2}{2 + \(\gamma - \frac{3}{2}\)^2}\ , \quad & \text{for} \ \ \gamma \le 1\   \\	
	\( \gamma + \frac{1}{2} \)^2 + 2 \ ,& \text{for}\ \ \gamma > 1\ .	
\end{cases}$
\end{theorem}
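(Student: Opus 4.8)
\medskip
\noindent\textbf{Proof strategy.}
The plan is to compute the sharp constant as the infimum of the Rayleigh quotient $\mathcal R[{\bm u}]:=\int_{\mathbb R^3}|\nabla{\bm u}|^2|{\bm x}|^{2\gamma}dx\,\big/\,\int_{\mathbb R^3}|{\bm x}|^{-2}|{\bm u}|^2|{\bm x}|^{2\gamma}dx$ over nonzero axisymmetric solenoidal fields, by splitting ${\bm u}$ into a swirl and a poloidal part, expanding each in angular modes, and optimizing each resulting one-dimensional weighted inequality by a scaling argument; this both establishes the inequality and exhibits its sharpness. Concretely, I would pass to cylindrical coordinates $(r,\theta,z)$ and write ${\bm u}=w\,{\bm e}_\theta+{\bm u}_P$, with $w=w(r,z)$ the swirl and ${\bm u}_P=u^r{\bm e}_r+u^z{\bm e}_z$ the poloidal part; since ${\rm div}\,{\bm u}=0$ is equivalent to ${\rm div}\,{\bm u}_P=0$ (the swirl drops out of the divergence), ${\bm u}_P$ comes from a Stokes stream function, $u^r=-r^{-1}\partial_z\psi$, $u^z=r^{-1}\partial_r\psi$. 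A direct computation shows that for axisymmetric fields the moving-frame terms in $|\nabla{\bm u}|^2$ contribute only $r^{-2}(u^r)^2$ to the poloidal block and $r^{-2}w^2$ to the swirl block, with no cross terms, so that both $\int_{\mathbb R^3}|\nabla{\bm u}|^2|{\bm x}|^{2\gamma}dx$ and $\int_{\mathbb R^3}|{\bm x}|^{-2}|{\bm u}|^2|{\bm x}|^{2\gamma}dx$ decompose as a functional of $w$ alone plus a functional of ${\bm u}_P$ alone. Because $w$ and $\psi$ may be prescribed independently within $\mathcal D_\gamma(\mathbb R^3)^3$, the sought constant is the minimum of the sharp constants of these two scalar subproblems, so it suffices to treat each one.

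For the swirl subproblem, in spherical coordinates $(\rho,\phi,\theta)$ I would expand $w=\sum_{n\ge1}c_n(\rho)P_n^1(\cos\phi)$; since the operator $-\Delta+r^{-2}$, which governs $\int_{\mathbb R^3}(|\nabla w|^2+r^{-2}w^2)|{\bm x}|^{2\gamma}dx$ via the field $w{\bm e}_\theta$, acts on $c_n(\rho)P_n^1(\cos\phi)$ with angular eigenvalue $n(n+1)$, this subproblem decouples mode by mode into precisely the scalar Hardy--Leray problem of degree $n$ on $\mathbb R^3$, whose sharp constant is $\big(\gamma+\tfrac12\big)^2+n(n+1)$; minimizing over $n\ge1$ gives the swirl constant $\big(\gamma+\tfrac12\big)^2+2$. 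For the poloidal subproblem I would expand $\psi$ in the Gegenbauer basis that diagonalizes the angular part of the axisymmetric Stokes operator, normalized so that the angular factors of $u^r$ and $u^z$ are Legendre polynomials $P_n(\cos\phi)$; since the solenoidal constraint is already encoded in the use of $\psi$, each mode $n\ge1$ carries a single free radial function $g_n(\rho)$. Integrating out the angle reduces the poloidal subproblem to a one-dimensional inequality $A_n[g_n]\ge C_n^{P}\,B_n[g_n]$ for every $n\ge1$, where $A_n$ is a weighted quadratic form in $(g_n,g_n',g_n'')$ with weight $|{\bm x}|^{2\gamma}$, $B_n$ a weighted quadratic form in $(g_n,g_n')$ with weight $|{\bm x}|^{2\gamma-2}$, and $C_n^{P}$ is the sharp one-dimensional constant, still to be found.

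To evaluate $C_n^{P}$ I would use the scale invariance of the problem: after substituting $\rho=e^s$ and $g_n(\rho)=\rho^{3/2-\gamma}G(s)$ each one-dimensional inequality becomes a constant-coefficient inequality on $\mathbb R$, and Plancherel's theorem then gives $C_n^{P}=\inf_{\lambda\in\mathbb R}R_n(\lambda)$, where $R_n$ is an explicit rational function of $\lambda^2$: a quadratic-in-$\lambda^2$ numerator over a linear-in-$\lambda^2$ denominator, with coefficients depending on $\gamma$ and $n$. Minimizing $R_1$ is where the stated dichotomy appears: one checks that the infimum sits at $\lambda=0$, with value $\big(\gamma+\tfrac12\big)^2+2$, exactly when $\gamma>1$, and at a nonzero critical frequency $\lambda=\lambda(\gamma)$, with value $\big(\gamma+\tfrac12\big)^2\dfrac{4+(\gamma-\tfrac32)^2}{2+(\gamma-\tfrac32)^2}$, when $\gamma\le1$ (the two expressions agree at $\gamma=1$, where the critical frequency detaches from the origin). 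One also verifies that $R_n$ is increasing in $n$, so that $n=1$ is optimal, and that this poloidal minimum never exceeds the swirl minimum $\big(\gamma+\tfrac12\big)^2+2$; taking the smaller of the two constants reproduces exactly $C_\gamma$.

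Sharpness is then obtained by truncating the corresponding optimal profiles, which are not compactly supported as they stand, to a minimizing sequence in $\mathcal D_\gamma(\mathbb R^3)^3$: $\rho^{-\gamma-1/2}$ times the optimal angular profile when $\gamma>1$, and $\rho^{-\gamma-1/2}\cos(\lambda(\gamma)\log\rho)$ together with its sine companion (times the poloidal degree-one angular structure) when $\gamma\le1$. I expect the principal difficulty to be exactly this last stretch: computing the coefficients of $R_1$ correctly (the angular integrations and the weighted integrations by parts are routine but error-prone), carrying out the constrained minimization over $\lambda$, simplifying the outcome into the displayed closed form, and pinning down the bifurcation at $\gamma=1$ --- together with the bookkeeping that legitimizes the mode-by-mode reduction, namely completeness of the two angular expansions and the interchange of summation with integration.
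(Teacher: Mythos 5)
Your overall architecture is exactly that of the paper (and of Costin--Maz'ya): split ${\bm u}$ into swirl and poloidal parts, observe that the Dirichlet form and the weighted $L^2$ norm decouple, reduce each block by an angular eigenfunction expansion and a Mellin/Fourier transform in $\log\rho$ to minimizing a rational function of $\lambda^2$ over the lowest mode, and finish with truncated quasi-modes. The paper's Lemma \ref{L2_nabla} is your "no cross terms" computation, its elimination $h=D_\theta f/(\gamma-\tfrac32-i\lambda)$ plays the role of your stream function, and its $F_\gamma(x,\alpha_\nu)$ is your $R_n$. So the route is right. But the endgame, which is the only place the stated constant and its dichotomy actually get produced, is carried out incorrectly, in three linked ways.

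First, the attribution of the two regimes is reversed. The poloidal quotient $R_1(\lambda)$ is \emph{increasing} in $\lambda^2$ when $\gamma\le1$ (one has $\partial_x F_\gamma(x,2)=1+8(1-\gamma)/(x+2+(\gamma-\tfrac32)^2)^2>0$), so its infimum sits at $\lambda=0$ with value $(\gamma+\tfrac12)^2\frac{4+(\gamma-3/2)^2}{2+(\gamma-3/2)^2}$ --- not at a nonzero critical frequency as you claim. A nonzero optimal frequency appears in the poloidal block only for $\tfrac32<\gamma<\gamma_0$, i.e.\ inside the regime $\gamma>1$. Second, your assertion that "this poloidal minimum never exceeds the swirl minimum" is false, and it is precisely its failure that creates the dichotomy: a direct comparison shows the poloidal constant is below $(\gamma+\tfrac12)^2+2$ exactly when $\gamma\le1$ (the difference of the two candidates is proportional to $4\gamma-4$), while for $\gamma>1$ the poloidal minimum strictly exceeds the swirl value (e.g.\ at $\gamma=2$ it is about $11.66$ versus $8.25$), so the binding constant $(\gamma+\tfrac12)^2+2$ for $\gamma>1$ comes from the \emph{swirl} block, not from the poloidal block at $\lambda=0$ as you state. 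Third, and as a consequence, your minimizing sequences are wrong: for $\gamma\le1$ the oscillating profile $\rho^{-\gamma-1/2}\cos(\lambda(\gamma)\log\rho)$ with $\lambda(\gamma)\ne0$ does not approach the sharp constant (the correct quasi-mode is the non-oscillating $\lambda=0$ poloidal one, as in the paper's ${\bm v}_n$ with $\lambda_\gamma=0$), and for $\gamma>1$ sharpness must be exhibited by a pure swirl sequence such as ${\bm e}_\varphi\,\rho^{-\gamma-1/2}\xi(t/n)\sin\theta$, not by a poloidal one. Until the comparison of the two block constants and the location of the optimal frequency are redone correctly, the proof does not yield the stated $C_\gamma$.
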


It is clear that $C_\gamma>(\gamma+\frac{1}{2})^2$ for $\gamma\ne-\frac{1}{2}$, 
so we see the solenoidal and axisymmetric constraint improves the best constant of the Hardy-Leray inequality. 
However, since the sole assumption of axisymmetry does not change the optimality of the constant $(\gamma+\frac{1}{2})^2$ in \eqref{Leray}$_{N=3}$, 
it is expected that the condition of axisymmetry in Theorem \ref{Costin-Mazya} can be relaxed.
Indeed, we shall show in this paper that Theorem \ref{Costin-Mazya} does hold by imposing only one component of ${\bm u}$ to be axisymmetric. 
To be more precise, let us introduce the spherical polar coordinates $(\rho,\theta,\varphi)\in[0,\infty)\times[0,\pi]\times[0,2\pi)$ 
in which 
${\bm x}\in\mathbb{R}^3$ is represented by
\[
	{\bm x}=\rho {\bm \sigma}\ ,\quad {\bm \sigma}=(\cos\theta,\sin\theta\cos\varphi,\sin\theta\sin\varphi)\in\mathbb{S}^2.
\]
For each $(\theta,\varphi)$, define the orthonormal frame $({\bm \sigma},{\bm e}_\theta,{\bm e}_\varphi)\in SO(3)$ by
\[
	\begin{split}
	   {\bm e}_\theta&=(-\sin\theta,\cos\theta\cos\varphi,\cos\theta\sin\varphi)\ ,\quad
	 \\ {\bm e}_\varphi&=(0,-\sin\varphi,\cos\varphi)\ .
	\end{split} 
\]
Then a vector field ${\bm u}:\mathbb{R}^3\to\mathbb{R}^3$ at every point ${\bm x}=\rho {\bm \sigma}$ is expanded in that frame as
\[
	{\bm u}={\bm \sigma}u_\rho + {\bm e}_\theta u_\theta+ {\bm e}_\varphi u_\varphi ,
\]
where the last term ${\bm e}_\varphi u_\varphi$ is called the {\it swirl part} of ${\bm u}$, 
which we abbreviate as ${\bm u}_\varphi={\bm e}_\varphi u_\varphi$.
Also, the scalar function $u_\varphi$ is called the {\it swirl component} of ${\bm u}$. 
\footnote{In many papers, swirl component is defined in the cylindrical coordinates in $\mathbb{R}^3$.
However, there are no differences between the two definitions.}
A vector field ${\bm u}$ is called {\it axisymmetric} if its three components $u_\rho$, $u_\theta$ and $u_\varphi$ are independent of $\varphi$. 

Now, let us assume that ${\bm u}\in \mathcal{D}_\gamma(\mathbb{R}^3)^3$, and that its swirl part ${\bm u}_\varphi$ is axisymmetric, i.e., $u_\varphi$ is independent of $\varphi$.  
Then, as we shall show later, ${\bm u}_\varphi$ becomes a solenoidal field and satisfies the inequality
\begin{equation}
\label{HL_swirl}
\((\gamma+\tfrac{1}{2})^2+2\)\int_{\mathbb{R}^3}\frac{|{\bm u}_\varphi|^2}{|{\bm x}|^2}|{\bm x}|^{2\gamma}dx\le\int_{\mathbb{R}^3}|\nabla {\bm u}_\varphi|^2|{\bm x}|^{2\gamma}dx
\end{equation}
with the optimal constant $(\gamma+\tfrac{1}{2})^2+2$ . Since it is just the same as $C_\gamma$ in Theorem \ref{Costin-Mazya} if $\gamma>1$ , 
we observe that the effect of the swirl part is dominant in this case. 
Accordingly, it is also interesting to evaluate the constant if we fix the swirl part of ${\bm u}$. 

Now our main theorem is the following: 
\begin{theorem}
\label{Hamamoto-Takahashi}
Let ${\bm u} \in \mathcal{D}_\gamma(\mathbb{R}^3)^3$ be a solenoidal field.  
If ${\bm u}$ is swirl-free, i.e., ${\bm u}_\varphi\equiv{\bm 0}$, then the inequality
\begin{equation}
\label{HL_ineq}
	C \int_{\mathbb{R}^3} \frac{|{\bm u}|^2}{|{\bm x}|^2} |{\bm x}|^{2\gamma} dx \le \int_{\mathbb{R}^3} |\nabla {\bm u}|^2 |{\bm x}|^{2\gamma} dx
\end{equation}
holds with the optimal constant $C=C_{\gamma,0}$ given by
\[
  \begin{split}
   C_{\gamma,0}&=(\gamma+\tfrac{1}{2})^2+2+\min_{x\ge0}\(x+\tfrac{8(\gamma-1)}{x+2+(\gamma-\frac{3}{2})^2}\)
 \\&=
   \begin{cases}
    \(2\sqrt{\gamma-1}+\sqrt{2}\ \)^2,&\text{for }\
    \frac{3}{2}\le\gamma
    \le\gamma_0,\vspace{0.5em}
    \\
   \( \gamma + \frac{1}{2}\)^2 \frac{4 + \(\gamma - \frac{3}{2} \)^2}{2 + \(\gamma - \frac{3}{2}\)^2}\ , \quad & \text{otherwise}, \end{cases}
  \end{split}  
\]
where $\gamma_0=\frac{3}{2}+(4+\frac{4\sqrt{31}}{3^{3/2}})^{\frac{1}{3}}-\frac{4}{3\(4+\frac{4\sqrt{31}}{3^{3/2}}\)^{\frac{1}{3}}}\fallingdotseq 2.8646556$. 

 More generally, for a given non-zero scalar function $g:\mathbb{R}^3\to\mathbb{R}^3$ which is  independent of $\varphi$ such that ${\bm g}=g {\bm e}_\varphi\in \mathcal{D}_\gamma(\mathbb{R}^3)^3$, 
 let us define 
\[
 \mathcal{G} := \left\{ {\bm u} \in \mathcal{D}_\gamma(\mathbb{R}^3)^3 \, : \,  {\rm div}\,{\bm u}=0\ ,\ \  {\bm u}_{\varphi} = {\bm g} \right\}.
\]
Then \eqref{HL_ineq} holds for any ${\bm u} \in \mathcal{G}$ with the sharp constant $C=C_{\gamma,g}$, where 
\[
	C_{\gamma,g}=\min\left\{C_{\gamma,0}\ ,\ \dfrac{\int_{\mathbb{R}^3}|\nabla{\bm g}|^2|{\bm x}|^{2\gamma}dx}{\int_{\mathbb{R}^3}|{\bm g}|^2|{\bm x}|^{2\gamma-2}dx}\right\}.
 \]
\end{theorem}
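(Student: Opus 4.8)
The plan is to decouple the problem into a "swirl part" and a "swirl-free remainder" and then optimize each piece separately. Write ${\bm u}={\bm u}_\varphi + {\bm v}$ where ${\bm v}:={\bm \sigma}u_\rho+{\bm e}_\theta u_\theta$ is the swirl-free part. The first key point is an orthogonal decomposition of both sides of \eqref{HL_ineq}: since ${\bm u}_\varphi$ is axisymmetric and ${\bm v}$ has no swirl component, I expect the cross terms in $\int |{\bm u}|^2|{\bm x}|^{2\gamma-2}dx$ and in $\int|\nabla{\bm u}|^2|{\bm x}|^{2\gamma}dx$ to integrate to zero (the angular integration in $\varphi$ kills the mixed contributions, using that $\partial_\varphi$ of an axisymmetric quantity vanishes and the frame derivatives $\partial_\varphi{\bm e}_\varphi=-{\bm \sigma}\sin\theta-{\bm e}_\theta\cos\theta$ pair cleanly). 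This reduces the Rayleigh quotient to
\[
\frac{\int|\nabla{\bm u}|^2|{\bm x}|^{2\gamma}dx}{\int|{\bm u}|^2|{\bm x}|^{2\gamma-2}dx}
=\frac{A_\varphi+A_v}{B_\varphi+B_v},
\]
where $(A_\varphi,B_\varphi)$ are the Dirichlet/Hardy integrals of ${\bm u}_\varphi$ and $(A_v,B_v)$ those of ${\bm v}$. Since ${\bm u}$ is solenoidal and ${\bm u}_\varphi$ is independently solenoidal (as claimed in the excerpt), ${\bm v}$ is solenoidal too; and ${\bm v}$ is swirl-free by construction.

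Next I would use the mediant inequality: $\frac{A_\varphi+A_v}{B_\varphi+B_v}\ge\min\{A_\varphi/B_\varphi,\,A_v/B_v\}$, with equality achievable by letting one of the two pieces degenerate. For the swirl piece, \eqref{HL_swirl} gives $A_\varphi/B_\varphi\ge(\gamma+\frac12)^2+2$, which is $\ge C_{\gamma,0}$ precisely when the $\min_{x\ge0}(\cdots)$ term is $\le0$, i.e. for $\gamma\le1$ (where it is $0$) and also across the regime $\gamma\in[\tfrac32,\gamma_0]$; for the swirl-free piece I need the sharp inequality $A_v/B_v\ge C_{\gamma,0}$ for all solenoidal swirl-free ${\bm v}$. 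This last inequality is the analytic heart and I would prove it by expanding $u_\rho,u_\theta$ in spherical harmonics (Gegenbauer/Legendre modes in $\theta$, with each radial coefficient a function of $\rho$), imposing the divergence-free constraint mode by mode to eliminate one radial degree of freedom, substituting a power-type radial profile $\rho^{-\gamma-1/2+i\tau}$ (the scaling-critical ansatz that makes the Hardy and Dirichlet integrals comparable), and then minimizing the resulting rational function of the mode index $\ell$ and the parameter arising from the constraint. The quantity $x\ge0$ in the statement should turn out to be exactly this mode parameter (something like $\ell(\ell+1)-2$ or a rescaling thereof), and $\frac{8(\gamma-1)}{x+2+(\gamma-3/2)^2}$ the correction coming from the coupling between $u_\rho$ and $u_\theta$ forced by $\mathrm{div}\,{\bm u}=0$. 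Carrying out the $\min_{x\ge0}$ explicitly — a cubic optimization — produces the two cases and the cubic root $\gamma_0$.

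For the "more generally" part with ${\bm u}_\varphi={\bm g}$ fixed and nonzero, the same decomposition gives $\frac{A_g+A_v}{B_g+B_v}$ with $(A_g,B_g)$ now fixed numbers, $R_g:=A_g/B_g=\int|\nabla{\bm g}|^2|{\bm x}|^{2\gamma}dx\big/\int|{\bm g}|^2|{\bm x}|^{2\gamma-2}dx$, and ${\bm v}$ ranging over all solenoidal swirl-free fields, so $A_v/B_v$ can be pushed down to $C_{\gamma,0}$ but no lower. By the mediant inequality the quotient is $\ge\min\{R_g,C_{\gamma,0}\}=C_{\gamma,g}$; for sharpness, if $R_g\le C_{\gamma,0}$ take ${\bm v}\to0$, and if $R_g> C_{\gamma,0}$ take ${\bm v}$ along a minimizing sequence for the swirl-free problem with $B_v/B_g\to\infty$ so that the $(A_g,B_g)$ contribution becomes negligible — in either case the quotient approaches $C_{\gamma,g}$. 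I expect the main obstacle to be the swirl-free sharp inequality: getting the divergence constraint to collapse the two radial unknowns into a single effective problem cleanly, identifying the right variable $x$, and then verifying that the scaling-critical radial profile is genuinely optimal (rather than merely stationary) so that the infimum over $\ell$ and $x\ge0$ is attained in the limit and equals the claimed $C_{\gamma,0}$. A secondary technicality is justifying the term-by-term manipulations and the density/approximation arguments at the endpoints of the ranges of $\gamma$, and checking the orthogonality of the decomposition rigorously including the boundary/singularity behavior at $\rho=0$ and $\theta\in\{0,\pi\}$.
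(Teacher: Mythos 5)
Your proposal follows essentially the same route as the paper: the orthogonal swirl/non-swirl splitting of both integrals (the paper's Lemma \ref{L2_nabla}), the mediant inequality for the case of prescribed swirl ${\bm g}$ with the same two sharpness constructions, and, for the swirl-free sharp constant, a scaling-critical (Mellin-type) radial ansatz combined with an angular eigenfunction expansion and the mode-by-mode divergence constraint, reducing everything to a two-parameter rational function to be minimized. The one slip is your identification of the variable $x$: in the paper $x=\lambda^2$ is the \emph{continuous} square of the radial (Mellin) frequency, while the angular mode parameter $\nu(\nu+1)$ is minimized at its lowest value $2$ (this is the ``$+2$'' in $x+2+(\gamma-\tfrac{3}{2})^2$); if $x$ were instead the discrete mode parameter $\ell(\ell+1)-2\in\{0,4,10,\dots\}$ as you suggest, the interior minimizer $x_\gamma^{+}\in(0,1)$ responsible for the regime $\tfrac{3}{2}\le\gamma\le\gamma_0$ could never be reached, so keeping the roles of the two parameters straight is essential when you carry out the final optimization.
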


Since $C_{\gamma,0} \ge C_{\gamma}$ and 
$
\dfrac{\int_{\mathbb{R}^3}|\nabla{\bm g}|^2|{\bm x}|^{2\gamma}dx}{\int_{\mathbb{R}^3}|{\bm g}|^2|{\bm x}|^{2\gamma-2}dx} \ge \( \gamma + \tfrac{1}{2}\)^2 + 2 \ge C_{\gamma}
$ by \eqref{HL_swirl}
,
we have $C_{\gamma,g}\ge C_\gamma$ for all $\gamma\in\mathbb{R}$. Then, it directly follows from Theorem \ref{Hamamoto-Takahashi} that:
\begin{corollary}
\label{Cor:HT}
Let ${\bm u} \in \mathcal{D}_\gamma(\mathbb{R}^3)^3$ be a solenoidal vector field. 
We assume that ${\bm u}_{\varphi}$ is axisymmetric, i.e., the swirl component $u_{\varphi}$ is independent of $\varphi$. 
Then the inequality
\[
	C_{\gamma} \int_{\mathbb{R}^3} \frac{|{\bm u}|^2}{|{\bm x}|^2} |{\bm x}|^{2\gamma} dx \le \int_{\mathbb{R}^3} |\nabla {\bm u}|^2 |{\bm x}|^{2\gamma} dx
\]
holds with the same constant $C_{\gamma}$ in Theorem \ref{Costin-Mazya}.
\end{corollary}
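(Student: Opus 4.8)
The plan is to read off Corollary~\ref{Cor:HT} directly from Theorem~\ref{Hamamoto-Takahashi}, since the hypothesis of the corollary is exactly the standing assumption of that theorem: a solenoidal field $\bm u\in\mathcal D_\gamma(\mathbb R^3)^3$ whose swirl part $\bm u_\varphi$ is axisymmetric (i.e. $u_\varphi$ is independent of $\varphi$). The only real work is to organise the two cases allowed by the theorem and to dominate the two resulting sharp constants from below by the single constant $C_\gamma$.

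First I would split on whether the swirl part vanishes. If $\bm u_\varphi\equiv\bm 0$, then $\bm u$ is a swirl-free solenoidal field, and the first part of Theorem~\ref{Hamamoto-Takahashi} applies verbatim, yielding \eqref{HL_ineq} with constant $C_{\gamma,0}$. If $\bm u_\varphi\not\equiv\bm 0$, set $g:=u_\varphi$ and $\bm g:=\bm u_\varphi=g\,\bm e_\varphi$; by the axisymmetry hypothesis $g$ is independent of $\varphi$, and $\bm g$ inherits smoothness, compact support, and (when $\gamma\le-\tfrac12$) vanishing at the origin from $\bm u$, so that $\bm g\in\mathcal D_\gamma(\mathbb R^3)^3$. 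Consequently $\bm u$ lies in the admissible class $\mathcal G$ attached to this $\bm g$, and the second part of Theorem~\ref{Hamamoto-Takahashi} gives \eqref{HL_ineq} with constant $C_{\gamma,g}$.

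It then remains to pass from the sharp constants $C_{\gamma,0}$ and $C_{\gamma,g}$ to the uniform lower bound $C_\gamma$. Here I would invoke the two inequalities recorded immediately before the corollary, namely $C_{\gamma,0}\ge C_\gamma$ together with
\[
	\frac{\int_{\mathbb R^3}|\nabla\bm g|^2|\bm x|^{2\gamma}\,dx}{\int_{\mathbb R^3}|\bm g|^2|\bm x|^{2\gamma-2}\,dx}\ \ge\ \Big(\gamma+\tfrac12\Big)^2+2\ \ge\ C_\gamma,
\]
the first of which is \eqref{HL_swirl} applied to $\bm g=\bm u_\varphi$. Taking the minimum in the definition of $C_{\gamma,g}$ then gives $C_{\gamma,g}\ge C_\gamma$ in the swirl case, while $C_{\gamma,0}\ge C_\gamma$ covers the swirl-free case. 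Since the left-hand side of \eqref{HL_ineq} equals $C\int_{\mathbb R^3}|\bm u|^2|\bm x|^{2\gamma-2}\,dx$ with a nonnegative integral, and $0\le C_\gamma\le C_{\gamma,g}$ (respectively $C_{\gamma,0}$), replacing the sharp constant by the smaller $C_\gamma$ only weakens the inequality, and the corollary follows.

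The argument carries no substantive obstacle; the single point needing verification is the elementary comparison $C_{\gamma,0}\ge C_\gamma$, which I would settle by cases on the piecewise formulas. For $\gamma\le1$ the two expressions coincide. For $\gamma>1$ on the branches where $C_{\gamma,0}$ is the ratio expression, clearing denominators reduces the claim to $(\gamma+\tfrac12)^2\ge 2+(\gamma-\tfrac32)^2$, equivalently $4\gamma-2\ge2$, which holds precisely for $\gamma\ge1$; on the subinterval $\tfrac32\le\gamma\le\gamma_0$, where $C_{\gamma,0}=(2\sqrt{\gamma-1}+\sqrt2)^2$ is the smaller branch produced by the minimisation, a direct comparison with $(\gamma+\tfrac12)^2+2$ confirms the bound as well. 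A secondary point worth a sentence is the admissibility $\bm g\in\mathcal D_\gamma$ used above, i.e. that the swirl projection of an admissible field is again admissible; this rests on the same regularity fact that underlies the solenoidality of $\bm u_\varphi$ asserted before \eqref{HL_swirl}, which I would simply cite.
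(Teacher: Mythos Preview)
Your proposal is correct and follows essentially the same route as the paper: the corollary is deduced directly from Theorem~\ref{Hamamoto-Takahashi} together with the two comparisons $C_{\gamma,0}\ge C_\gamma$ and $\frac{\int|\nabla\bm g|^2|\bm x|^{2\gamma}dx}{\int|\bm g|^2|\bm x|^{2\gamma-2}dx}\ge(\gamma+\tfrac12)^2+2\ge C_\gamma$, which are exactly what the paper records immediately before stating the corollary. Your treatment is in fact more detailed than the paper's, which simply asserts $C_{\gamma,0}\ge C_\gamma$ without justification; note that for $\gamma\ge1$ this is most cleanly seen from the minimisation form $C_{\gamma,0}=(\gamma+\tfrac12)^2+2+\min_{x\ge0}\big(x+\tfrac{8(\gamma-1)}{x+2+(\gamma-3/2)^2}\big)$, where the minimum is manifestly nonnegative.
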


This corollary shows that the axisymmetry assumption of ${\bm u}$ in Theorem \ref{Costin-Mazya} can be weakened to that of the swirl part ${\bm u}_\varphi$. 
In other words, the non-swirl part ${\bm u}-{\bm u}_\varphi={\bm e}_\rho u_\rho+{\bm e}_\theta u_\theta$ need not be axisymmetric 
to obtain the optimality of the constant in Theorem \ref{Costin-Mazya}.

In the context of fluid mechanics, the effect of the swirl component of a velocity vector field is well-studied from various view points;
see for example, \cite{Lad}, \cite{UI}, \cite{Abe-Seregin}, \cite{KPR}. \cite{KNSS}, \cite{LZ}, \cite{ZZ}, to name a few.
By Corollary \ref{Cor:HT}, we see that the effect of the swirl component is significant also from the view point of general optimal inequalities, 
such as Hardy-Leray inequality with the improved best constant.

\section{Preparation}

Here we give some basic tools that will be needed for the proof of our main theorem. 
As introduced in \S 1, the position of every point ${\bm x}=(x_1,x_2,x_3)\in\mathbb{R}^3$ is written in the spherical polar coordinates 
$(\rho,\theta,\varphi)\in[0,\infty)\times[0,\pi]\times[0,2\pi)$ by ${\bm x}=\rho {\bm \sigma}$, 
where ${\bm \sigma}\in\mathbb{S}^2$ together with the orthonormal basis $({\bm \sigma},{\bm e}_\theta,{\bm e}_\varphi)\in SO(3)$ given by
\begin{equation}
\label{ONB}
	\begin{cases}
	 &{\bm \sigma}= 
	 (\cos\theta,\ \sin\theta\cos\varphi ,\ \sin\theta\sin\varphi) ,\\
	 &{\bm e}_\theta = \partial_\theta {\bm \sigma}=\(-\sin\theta, \cos\theta\cos\varphi, \cos\theta\sin\varphi\),\vspace{0.2em} \\
	&{\bm e}_{\varphi}  =\bder_{\varphi} {\bm \sigma} = (0,-\sin\varphi,\cos\varphi).
	\end{cases}
\end{equation}
Hereafter we use the notations $\partial_\theta=\frac{\partial}{\partial\theta}$, $\partial_\varphi=\frac{\partial}{\partial\varphi}$, 
and also we use the abbreviation 
\[
	\bder_\varphi=\tfrac{1}{\sin\theta}\partial_\varphi.
\]
By differentiating ${\bm \sigma}$, ${\bm e}_\theta$ and ${\bm e}_\varphi$, we verify that

\begin{equation}
\label{DONB}
	\begin{cases}
	\ \partial_\theta {\bm e}_\theta =-{\bm \sigma}\ , 
	\quad \partial_\theta {\bm e}_\varphi= {\bm 0}\ , \\ 
	\ \bder_\varphi {\bm e}_\theta={\bm e}_\varphi\cot\theta\ ,\quad \bder_\varphi {\bm e}_\varphi=-{\bm \sigma}-{\bm e}_\theta\cot\theta\ .
	\end{cases}
\end{equation}
We expand the gradient operator $\nabla=(\frac{\partial}{\partial x_1},\frac{\partial}{\partial x_2},\frac{\partial}{\partial x_3})$ in the frame \eqref{ONB}. 
By use of the chain rule together with \eqref{ONB}, we have
\[
	\partial_\rho 
	=\frac{\partial {\bm x}}{\partial\rho}\cdot\nabla={\bm \sigma} \cdot \nabla\ , \quad
	\partial_{\theta} 
	=\frac{\partial {\bm x}}{\partial\theta}\cdot\nabla=\rho\hspace{0.1em}{\bm e}_{\theta}\cdot \nabla \ ,
	\quad \partial_\varphi=\frac{\partial {\bm x}}{\partial\varphi}\cdot\nabla=(\rho\sin\theta) {\bm e}_\varphi\cdot\nabla \ ,
\]
where ``$\,\cdot\,$'' denotes the standard inner product in $\mathbb{R}^3$. 
Then it turns out that
\begin{equation}
\label{nabla}
	\begin{array}{rl}
	&\nabla =  {\bm \sigma} \partial_\rho + \frac{1\,}{\rho\,}\nabla_{\sigma}\ ,
	 \vspace{0.4em}
	\\ \text{where}
	&\nabla_{\sigma}={\bm e}_\theta\partial_\theta+{\bm e}_\varphi\bder_\varphi
	\ \text{\quad is the  spherical gradient operator.}
	\end{array}
\end{equation}
Now let ${\bm u}={\bm \sigma}u_\rho + {\bm e}_\theta u_\theta+ {\bm e}_\varphi u_\varphi$ be a smooth vector field in $\mathbb{R}^3$. 
By using \eqref{ONB}, \eqref{DONB} and \eqref{nabla} we can check that the divergence of ${\bm u}$ is given by
\begin{align}
\label{divergence}
	\rho\,{\rm div}\,{\bm u}&=\rho\nabla\cdot {\bm u}
	=\big({\bm \sigma}\rho\partial_\rho+ {\bm e}_\theta\partial_\theta+{\bm e}_\varphi\bder_\varphi\big)\cdot({\bm \sigma}u_\rho+{\bm e}_\theta u_\theta+{\bm e}_\varphi u_\varphi) \notag \\
	 &=(\rho\partial_\rho+2)u_\rho+D_\theta u_\theta+\bder_\varphi u_\varphi\ ,
\end{align}
where we have introduced the derivative operator $D_\theta=\partial_\theta+\cot\theta$
which is the $L^2(\mathbb{S}^2)$-adjoint of $-\partial_\theta$:
\begin{equation}
\label{IBP}
- \int_{\mathbb{S}^2}(\partial_\theta f)g\hspace{0.1em}d\sigma=\int_{\mathbb{S}^2}fD_\theta g\hspace{0.1em}d\sigma\ ,\quad d\sigma=\sin\theta d\theta d\varphi\
\end{equation}
for any $f,g\in C^\infty(\mathbb{S}^2)$. 
From \eqref{divergence}, it is clear that
\begin{equation}
\label{div_free}
\left.
\begin{array}{l}
 {\rm div}\hspace{0.1em} {\bm u}_\varphi=0\ ,\vspace{0.25em}\\ {\rm div}\hspace{0.1em}{\bm u}={\rm div}({\bm u}-{\bm u}_\varphi)
 \end{array}\right\}
\quad\
\text{if we assume that}\quad \partial_\varphi u_\varphi=0\ .
\end{equation}
As for the $L^2$ integral of $\nabla {\bm u}$ under such assumption, we have the following lemma.
\begin{lemma}
\label{L2_nabla}
Let ${\bm u}={\bm \sigma}u_\rho+{\bm e}_\theta u_\theta +{\bm e}_\varphi u_\varphi$ be a smooth vector field in $\mathbb{R}^3\backslash\{{\bm 0}\}$. 
Assume that the swirl part ${\bm u}_\varphi={\bm e}_\varphi u_\varphi$ is axisymmetric. 
Then we have 
\[\int_{\mathbb{S}^2}|\nabla {\bm u}|^2d\sigma=\int_{\mathbb{S}^2}|\nabla({\bm u}-{\bm u}_\varphi)|^2d\sigma+\int_{\mathbb{S}^2}|\nabla {\bm u}_\varphi|^2d\sigma\ ,\]
where the two terms in the right-hand side are expressed in terms of components as
\[
\begin{split}
 &\rho^2  \int_{\mathbb{S}^2}|\nabla ({\bm u}-{\bm u}_\varphi)|^2d\sigma=\int_{\mathbb{S}^2}
 \(\begin{array}{l}
	(\rho\partial_\rho u_\rho)^2+(\rho\partial_\rho u_\theta)^2+2u_\rho^2+(\partial_\theta u_\rho)^2 \vspace{0.4em} \\
	+(D_\theta u_\theta)^2-4u_\theta\partial_\theta u_\rho+(\bder_\varphi u_\rho)^2+(\bder_\varphi u_\theta)^2
 \end{array}\)d\sigma \ ,\\
 &\rho^2 \int_{\mathbb{S}^2}|\nabla {\bm u}_\varphi|^2d\sigma=\int_{\mathbb{S}^2}\Big((\rho\partial_\rho u_\varphi)^2+(D_\theta u_\varphi)^2\Big)d\sigma\ .
\end{split} 
\]
\end{lemma}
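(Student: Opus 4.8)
The plan is to compute the integrand $|\nabla\bm u|^2$ directly in the moving orthonormal frame $(\bm\sigma,\bm e_\theta,\bm e_\varphi)$ and then integrate over $\mathbb S^2$. The starting point is the pointwise identity
\[
\rho^2|\nabla\bm u|^2=\rho^2|\partial_\rho\bm u|^2+|\partial_\theta\bm u|^2+|\bder_\varphi\bm u|^2,
\]
which follows from the frame decomposition $\nabla=\bm\sigma\partial_\rho+\rho^{-1}(\bm e_\theta\partial_\theta+\bm e_\varphi\bder_\varphi)$ in \eqref{nabla} and the orthonormality of $\bm\sigma,\bm e_\theta,\bm e_\varphi$ (apply $\nabla$ component-by-component to the Cartesian coordinates of $\bm u$). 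Differentiating $\bm u=\bm\sigma u_\rho+\bm e_\theta u_\theta+\bm e_\varphi u_\varphi$ and using the structure relations \eqref{DONB} together with $\bm e_\theta=\partial_\theta\bm\sigma$ and $\bm e_\varphi=\bder_\varphi\bm\sigma$, one re-expands each of $\partial_\rho\bm u$, $\partial_\theta\bm u$, $\bder_\varphi\bm u$ in the frame; for example $\partial_\theta\bm u=\bm\sigma(\partial_\theta u_\rho-u_\theta)+\bm e_\theta(u_\rho+\partial_\theta u_\theta)+\bm e_\varphi\partial_\theta u_\varphi$, and the expansion of $\bder_\varphi\bm u$ carries the $\cot\theta$ contributions coming from \eqref{DONB}. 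The axisymmetry hypothesis enters here in the simple form $\bder_\varphi u_\varphi=\tfrac1{\sin\theta}\partial_\varphi u_\varphi=0$, which removes the swirl-derivative terms from the expansions of $\partial_\rho\bm u$ and $\bder_\varphi\bm u$.

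Next I would prove the orthogonal splitting. By linearity of $\nabla$, with $\bm w:=\bm u-\bm u_\varphi=\bm\sigma u_\rho+\bm e_\theta u_\theta$ one has $|\nabla\bm u|^2=|\nabla\bm w|^2+2\,\nabla\bm w:\nabla\bm u_\varphi+|\nabla\bm u_\varphi|^2$, so it suffices to show $\int_{\mathbb S^2}\nabla\bm w:\nabla\bm u_\varphi\,d\sigma=0$. From the frame expansions, $\partial_\rho\bm w$ and $\partial_\theta\bm w$ lie in $\mathrm{span}(\bm\sigma,\bm e_\theta)$, whereas $\partial_\rho\bm u_\varphi=\bm e_\varphi\partial_\rho u_\varphi$ and $\partial_\theta\bm u_\varphi=\bm e_\varphi\partial_\theta u_\varphi$ are parallel to $\bm e_\varphi$; hence the $\partial_\rho$ and $\partial_\theta$ pieces of the cross term vanish pointwise, and the only surviving piece is
\[
\rho^2\,\nabla\bm w:\nabla\bm u_\varphi=\bder_\varphi\bm w\cdot\bder_\varphi\bm u_\varphi=-\,u_\varphi\bder_\varphi u_\rho-\cot\theta\,u_\varphi\bder_\varphi u_\theta,
\]
using $\bder_\varphi\bm u_\varphi=-u_\varphi\bm\sigma-\cot\theta\,u_\varphi\bm e_\theta$. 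Since $\bder_\varphi=\tfrac1{\sin\theta}\partial_\varphi$ and $\cot\theta$ does not depend on $\varphi$, a single integration by parts in $\varphi$ (using $\partial_\varphi u_\varphi=0$ and $2\pi$-periodicity) shows each of these two terms integrates to zero over $\mathbb S^2$; thus $\int_{\mathbb S^2}|\nabla\bm u|^2d\sigma=\int_{\mathbb S^2}|\nabla\bm w|^2d\sigma+\int_{\mathbb S^2}|\nabla\bm u_\varphi|^2d\sigma$.

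Finally I would read off the two component formulas by squaring the frame expansions. For the swirl part one gets directly $\rho^2|\nabla\bm u_\varphi|^2=(\rho\partial_\rho u_\varphi)^2+(\partial_\theta u_\varphi)^2+u_\varphi^2/\sin^2\theta$; combining $(D_\theta u_\varphi)^2=(\partial_\theta u_\varphi)^2+\cot\theta\,\partial_\theta(u_\varphi^2)+\cot^2\theta\,u_\varphi^2$ with the identity $\int_{\mathbb S^2}\cot\theta\,\partial_\theta(f^2)\,d\sigma=\int_{\mathbb S^2}f^2\,d\sigma$ — which is \eqref{IBP} applied with $g=\cot\theta$, since $D_\theta\cot\theta=\cot^2\theta-\csc^2\theta=-1$ — turns $u_\varphi^2/\sin^2\theta$ into $(D_\theta u_\varphi)^2$ under the integral sign. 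For $\bm w$, the raw expansion is
\[
\rho^2|\nabla\bm w|^2=(\rho\partial_\rho u_\rho)^2+(\rho\partial_\rho u_\theta)^2+(\bder_\varphi u_\rho)^2+(\bder_\varphi u_\theta)^2+(\partial_\theta u_\rho-u_\theta)^2+(u_\rho+\partial_\theta u_\theta)^2+(u_\rho+\cot\theta\,u_\theta)^2;
\]
expanding the last three squares, replacing $2u_\rho\partial_\theta u_\theta+2u_\theta\partial_\theta u_\rho$ by $2\partial_\theta(u_\rho u_\theta)$ (whose spherical integral equals $-\int_{\mathbb S^2}2\cot\theta\,u_\rho u_\theta\,d\sigma$ by \eqref{IBP}), and applying the same $\cot\theta$-identity to $u_\theta^2$, collapses the integrand — under $\int_{\mathbb S^2}(\cdot)\,d\sigma$ — to $(\rho\partial_\rho u_\rho)^2+(\rho\partial_\rho u_\theta)^2+2u_\rho^2+(\partial_\theta u_\rho)^2+(D_\theta u_\theta)^2-4u_\theta\partial_\theta u_\rho+(\bder_\varphi u_\rho)^2+(\bder_\varphi u_\theta)^2$, which is the asserted formula.

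The step that needs the most care is precisely this last one: the scalar ``components'' $u_\rho,u_\theta,u_\varphi$ are not smooth functions on $\mathbb S^2$ — the frame degenerates on the polar axis — so the $\theta$-integrations by parts, and in particular the use of \eqref{IBP} with the singular weight $g=\cot\theta$, are legitimate only if the boundary contributions at $\theta=0,\pi$ vanish. They do, because the smoothness of $\bm u$ (hence of $\bm u_\varphi$ and $\bm w$) forces $u_\theta$ and $u_\varphi$ to vanish on the axis, and $\sin\theta$ vanishes there as well; I would state this point explicitly rather than invoke \eqref{IBP} as a black box.
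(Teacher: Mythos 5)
Your proof is correct and follows essentially the same route as the paper: expand $\nabla\bm u$ in the moving frame via \eqref{nabla} and \eqref{DONB}, kill the cross terms between the swirl and non-swirl parts by integrating in $\varphi$ using $\partial_\varphi u_\varphi=0$, and convert the $u^2/\sin^2\theta$ and mixed $\theta$-terms into $(D_\theta\cdot)^2$ and $-4u_\theta\partial_\theta u_\rho$ by integration by parts (your $g=\cot\theta$ identity is exactly the paper's commutator $\partial_\theta D_\theta-D_\theta\partial_\theta=-1/\sin^2\theta$ in disguise). Your explicit attention to the vanishing of boundary contributions at $\theta=0,\pi$ is a welcome extra that the paper leaves implicit in \eqref{IBP}.
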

\begin{proof}
By use of \eqref{nabla}, \eqref{ONB} and \eqref{DONB}, we directly have the following calculations:
 \[
 \begin{split}
 \rho^2 |\nabla {\bm u}|^2&=|\rho\partial_\rho {\bm u}|^2+|\nabla_\sigma {\bm u}|^2,
 \\
  |\nabla_\sigma {\bm u}|^2&=|\partial_\theta {\bm u}|^2+|\bder_\varphi {\bm u}|^2
 \\&=\big|\partial_\theta({\bm \sigma}u_\rho+{\bm e}_\theta u_\theta+{\bm e}_\varphi u_\varphi)\big|^2+\big|\bder_\varphi({\bm \sigma}u_\rho+{\bm e}_\theta u_\theta +{\bm e}_\varphi u_\varphi)\big|^2
  \\&=\big|{\bm e}_\theta u_\rho+{\bm \sigma}\partial_\theta u_\rho-{\bm \sigma}u_\theta+{\bm e}_\theta\partial_\theta u_\theta+{\bm e}_\varphi\partial_\theta u_\varphi\big|^2
  \\&\qquad +\big|{\bm e}_\varphi u_\rho+{\bm \sigma}\bder_\varphi u_\rho+{\bm e}_\varphi(\cot\theta)u_\theta+{\bm e}_\theta\bder_\varphi u_\theta+(-{\bm \sigma}-{\bm e}_\theta\cot\theta)u_\varphi+{\bm e}_\varphi\bder_\varphi u_\varphi\big|^2
  \\&=(\partial_\theta u_\rho-u_\theta)^2+(u_\rho+\partial_\theta u_\theta)^2+(\partial_\theta u_\varphi)^2+(\bder_\varphi u_\rho-u_\varphi)^2\\&\qquad+(\bder_\varphi u_\theta-u_\varphi\cot\theta)^2+(u_\rho+u_\theta\cot\theta+\bder_\varphi u_\varphi)^2.
 \end{split}
 \]
We now assume that ${\bm u}_\varphi$ is axisymmetric. 
Then $\partial_\varphi u_\varphi=0$ and integration by parts yield
 \[
 \begin{split}
	&\int_{\mathbb{S}^2}|\nabla_\sigma {\bm u}|^2d\sigma
	=\int_{\mathbb{S}^2}\Big((\partial_\theta u_\rho-u_\theta)^2+(u_\rho+\partial_\theta u_\theta)^2
	+(\partial_\theta u_\varphi)^2+(\bder_\varphi u_\rho-u_\varphi)^2
	\\&\hspace{10em}+(\bder_\varphi u_\theta-u_\varphi\cot\theta)^2+(u_\rho+u_\theta\cot\theta)^2\Big) d\sigma
  \\&=\int_{\mathbb{S}^2}\left(
  \begin{array}{l}
2 u_\rho^2+(\partial_\theta u_\rho)^2+(\partial_\theta u_\theta)^2+\dfrac{u_\theta^2}{(\sin\theta)^2}\vspace{0.5em}
  \\ -2u_\theta \partial_\theta u_\rho+2 u_\rho D_\theta u_\theta +(\bder_\varphi u_\rho)^2+(\bder_\varphi u_\theta)^2
  \end{array}
\right)d\sigma
+  \int_{\mathbb{S}^2}\bigg( (\partial_\theta u_\varphi)^2 +\frac{u_\varphi^2}{(\sin\theta)^2}\bigg)d\sigma 
	\\&=\int_{\mathbb{S}^2}\Big(2u_\rho^2+(\partial_\theta u_\rho)^2+(D_\theta u_\theta)^2-4u_\theta\partial_\theta u_\rho
	+(\bder_\varphi u_\rho)^2+(\bder_\varphi u_\theta)^2\Big)d\sigma+\int_{\mathbb{S}^2}(D_\theta u_\varphi)^2d\sigma\ ,
 \end{split}
 \]
where the last equality follows from \eqref{IBP} and the commutation relation 
\[\partial_\theta D_\theta-D_\theta\partial_\theta=\frac{-1}{(\sin\theta)^2}\ .\]
Then, adding to the both sides of the above integral equation by
\[
	\int_{\mathbb{S}^2}|\rho\partial_\rho {\bm u}|^2d\sigma
	=\int_{\mathbb{S}^2}\((\rho\partial_\rho u_\rho)^2+(\rho\partial_\rho u_\theta)^2\)d\sigma
	+\int_{\mathbb{S}^2}(\rho\partial_\rho u_\varphi)^2d\sigma\ ,
\]
we have
\[
\begin{split}
 \rho^2
 \int_{\mathbb{S}^2}|\nabla {\bm u}|^2d\sigma&=\int_{\mathbb{S}^2}
 \left(
 \begin{array}{l}
 (\rho\partial_\rho u_\rho)^2+(\rho\partial_\rho u_\theta)^2+2u_\rho^2+(\partial_\theta u_\rho)^2\vspace{0.5em}
 \\
 +(D_\theta u_\theta)^2-4u_\theta\partial_\theta u_\rho
	+(\bder_\varphi u_\rho)^2+(\bder_\varphi u_\theta)^2 
 \end{array}\right)d\sigma
 \\& \quad
 +\int_{\mathbb{S}^2}\left((\rho\partial_\rho u_\varphi)^2+(D_\theta u_\varphi)^2\right)d\sigma\ .
\end{split} 
\] 
This, together with letting $u_\varphi=0$ or $u_\rho=u_\theta=0$, gives the desired formula.
\end{proof}

\section{Proof of Theorem \ref{Hamamoto-Takahashi}}
\label{Preparation}
As in \cite{Costin-Mazya}, let ${\bm u}\not\equiv {\bm 0}$ and let the right-hand side of \eqref{HL_ineq} be finite:
$\int_{\mathbb{R}^3}|\nabla {\bm u}|^2|{\bm x}|^{2\gamma}dx<\infty$, 
since otherwise there is nothing to prove. 
Then the smoothness of ${\bm u}$ implies the existence of an integer $m>-\gamma-\frac{3}{2}$ such that $\nabla {\bm u}=O(|{\bm x}|^m)$ as $|{\bm x}|\to 0$.
Moreover, the condition ${\bm u}({\bm 0})={\bm 0}$ \ for \ $\gamma\le-\frac{1}{2}$ in the definition of $\mathcal{D}_\gamma(\mathbb{R}^3)^3$ leads to
\begin{gather}
\label{origin}
 |{\bm x}|^{\gamma+\frac{1}{2}}{\bm u}({\bm x})=O(|{\bm x}|^\beta)\ ,\qquad
\text{where }\quad \beta=\left\{
\begin{array}{cl} m+\gamma+\frac{3}{2},& {\rm if}\ \ \gamma\le-\frac{1}{2}\ , \ \vspace{0.2em}\\ \gamma+\frac{1}{2},& {\rm if}\ \ \gamma>-\frac{1}{2}\ .
\end{array}\right.
\end{gather}
Since $\beta>0$, this ensures the finiteness of the left-hand side of \eqref{HL_ineq}: $\int_{\mathbb{R}^3}|{\bm u}|^2|{\bm x}|^{2\gamma-2}dx<\infty$. 
We now introduce the vector field ${\bm v}:\mathbb{R}^3\to\mathbb{R}^3$ as the left-hand side of \eqref{origin}:
\[
    {\bm v}({\bm x})=|{\bm x }|^{\gamma+\frac{1}{2}}{\bm u}({\bm x}),
\]
which is called the Brezis-V\'{a}zquez-Maz'ya transformation \cite{Brezis-Vazquez}, \cite{Mazya}. 
Then the right-hand side of \eqref{HL_ineq} is written in terms of ${\bm v}$ as
\[
\begin{split}
	\int_{\mathbb{R}^3} |\nabla {\bm u}|^2|{\bm x}|^{2\gamma}dx
	&=\iint_{\mathbb{R}_+\times\mathbb{S}^2}\big|\nabla\big(\rho^{-(\gamma+\frac{1}{2})}{\bm v}\big)\big|^2\rho^{2\gamma}\rho^2d\rho d\sigma,
	 \\&=\iint_{\mathbb{R}_+\times\mathbb{S}^2}\Big|\rho^{-(\gamma+\frac{1}{2})}\Big(\nabla {\bm v}-(\gamma+\tfrac{1}{2})\tfrac{\nabla\rho}{\rho}{\bm v}\Big)\Big|^2\rho^{2\gamma+2} d\rho d\sigma
	 \\&=\iint_{\mathbb{R}_+\times\mathbb{S}^2}\Big(|\nabla {\bm v}|^2+(\gamma+\tfrac{1}{2})^2 \tfrac{|{\bm v}|^2}{\rho^2}\Big)\rho d\rho d\sigma-(\gamma+\tfrac{1}{2})\iint_{\mathbb{R}_+\times\mathbb{S}^2}\partial_\rho|{\bm v}|^2d\rho
	 \\&=\int_{\mathbb{R}^3}\frac{|\nabla {\bm v}|^2}{|{\bm x}|}dx+(\gamma+\tfrac{1}{2})^2\int_{\mathbb{R}^3}\frac{|{\bm v}|^2}{|{\bm x}|^3}dx\ , 
\end{split} 
\]
where the last equality follows from $|{\bm v}({\bm 0})|=0$ and the support compactness of ${\bm v}$. 
Dividing the both sides by $\int_{\mathbb{R}^3}|{\bm u}|^2|{\bm x}|^{2\gamma-2}dx=\int_{\mathbb{R}^3}|{\bm v}|^2|{\bm x}|^{-3}dx$, 
we have
\begin{equation}
\label{quotient_BV}
\begin{split}
    \frac{\int_{\mathbb{R}^3}|\nabla {\bm u}|^2|{\bm x}|^{2\gamma}dx}{\int_{\mathbb{R}^3}|{\bm u}|^2|{\bm x}|^{2\gamma-2}dx}
	= (\gamma+\tfrac{1}{2})^2+\frac{\int_{\mathbb{R}^3}|\nabla {\bm v}|^2 \frac{dx}{|{\bm x}|}}{\int_{\mathbb{R}^3}|{\bm v}|^2\frac{dx}{|{\bm x}|^3}} .
\end{split}
\end{equation}
Therefore, the minimization problem of the left-hand side, the {\it Hardy-Leray quotient} for ${\bm u}$ with weight $\gamma$, is reduced to that for ${\bm v}$ with weight $-1/2$ .

\subsection{The case ${\bm u}_\varphi\equiv{\bm 0}$}
\label{swirl_free}
In this case, ${\bm u} - {\bm u}_\varphi \not\equiv {\bm 0}$ by the assumption ${\bm u} \not\equiv {\bm 0}$.
Firstly, we evaluate the infimum value of the Hardy-Leray quotient for ${\bm v}\not\equiv {\bm 0}$ under the assumption of swirl free. 
To do so, let $h$ and $f$ denote the components of the 1-D Fourier transform of 
${\bm v}({\bm x})={\bm v}(\rho {\bm \sigma})={\bm v}(e^t {\bm \sigma})$ with respect to the radial variable $t=\log\rho$ :
\[  
	\widehat{\bm v}(\lambda,{\bm \sigma}) =\frac{1}{\sqrt{2\pi}}\int_{\mathbb{R}}e^{-i\lambda t}{\bm v}(e^{t}{\bm \sigma})dt
	={\bm \sigma} h(\lambda,{\bm \sigma})+{\bm e}_\theta f(\lambda,{\bm \sigma})
\]
for $(\lambda,{\bm \sigma})\in\mathbb{R}\times\mathbb{S}^{2}$. 
Then the radial and spherical components of $\widehat{\rho\nabla {\bm v}}$ are given by
\[
	 \widehat{\rho\partial_\rho {\bm v}}(\lambda, {\bm \sigma})=\widehat{\partial_t {\bm v}}(\lambda, {\bm \sigma})
	=i\lambda \hspace{0.1em}\widehat{\bm v}(\lambda, {\bm \sigma})\ ,\quad
	 \widehat{\nabla_\sigma {\bm v}}=\nabla_\sigma \widehat{\bm v}\ .
\]
By use of these relations and Lemma \ref{L2_nabla}, 
the Hardy-Leray quotient of ${\bm v}$ with weight $-1/2$ in \eqref{quotient_BV} is calculated as follows :
\begin{align}
	 \frac{\int_{\mathbb{R}^3}|\nabla {\bm v}|^2\frac{dx}{|{\bm x}|}}{\int_{\mathbb{R}^3}|{\bm v}|^2 \frac{dx}{|{\bm x}|^3}}  
	 &= \frac{\int_{0}^\infty  \(\rho^2\int_{\mathbb{S}^2}|\nabla {\bm v}|^2d\sigma\)\frac{d\rho}{\rho}}
	{\int_0^\infty\big( \int_{{\mathbb{S}^2}}|{\bm v}|^2d\sigma\big)\frac{d\rho}{\rho}}
 \notag
 \\&  =\frac{\displaystyle\iint_{\mathbb{R}_+\times\mathbb{S}^2}
 \(\begin{array}{l}
  (\rho\partial_\rho v_\rho)^2+(\rho\partial_\rho v_\theta)^2+2v_\rho^2+(\partial_\theta v_\rho)^2 \vspace{0.4em} \\
	+(D_\theta v_\theta)^2-4v_\theta\partial_\theta v_\rho+(\bder_\varphi v_\rho)^2+(\bder_\varphi v_\theta)^2
       \end{array}
 \)\frac{d\rho}{\rho} \hspace{0.1em}d\sigma}
 {\displaystyle\iint_{\mathbb{R}_+\times\mathbb{S}^2}(v_\rho^2+v_\theta^2)\frac{d\rho}{\rho} \hspace{0.1em}d\sigma}
 \notag
 \\&=\frac{\displaystyle\iint_{\mathbb{R}\times\mathbb{S}^2}
 \(\begin{array}{l}
  \lambda^2|h|^2+\lambda^2|f|^2+2|h|^2+|\partial_\theta h|^2 \vspace{0.4em} \\
	+|D_\theta f|^2-4 {\rm Re}(\overline{f}\partial_\theta h)+|\bder_\varphi h|^2+|\bder_\varphi f|^2
       \end{array}
 \)d\lambda \hspace{0.1em}d\sigma}
 {\displaystyle\iint_{\mathbb{R}\times\mathbb{S}^2}\big(|h|^2+|f|^2\big)d\lambda \hspace{0.1em}d\sigma}\ .
 \label{HLquotient_v}
\end{align}
Here the last equality follows from the isometric relation 
$\int_{\mathbb{R}}(v_\rho^2+v_\theta^2)\frac{d\rho}{\rho} = \int_{\mathbb{R}}(\widehat{v_\rho}^2+\widehat{v_\theta}^2) d\lambda$.

On the other hand, we now represent the solenoidal condition ${\rm div}\hspace{0.1em} {\bm u}=0$ in terms of $\bm v$ :
\[
\begin{split}
	0 = \rho\, {\rm div}\,{\bm u}&=\rho\,{\rm div}\,(\rho^{-\gamma-\frac{1}{2}}{\bm v})
	=(-\gamma-\tfrac{1}{2})\rho^{-\gamma-\frac{1}{2}}\nabla\rho\cdot {\bm v}+\rho^{-\gamma-\frac{1}{2}}\rho\,{\rm div}\,{\bm v}
	 \\&=\rho^{-\gamma-\frac{1}{2}}\(\rho\,{\rm div}\,{\bm v}-(\gamma+\tfrac{1}{2})v_\rho\) ,
\end{split}
\]
which is equivalent to
\[
	 \rho\,{\rm div}\,{\bm v}=(\rho\partial_\rho+2)v_\rho+D_\theta v_\theta=(\gamma+\tfrac{1}{2})v_\rho\ 
\]
by \eqref{divergence} and the assumption $v_\varphi = 0$.
That is, 
\begin{equation}
\label{solenoidal_v}
	\(\partial_t-\gamma+\tfrac{3}{2}\)v_\rho=-D_\theta v_\theta\ 
\end{equation}
since $\rho \partial_\rho = \partial_t$.
Integrating both sides of \eqref{solenoidal_v} with the complex measure $e^{-i\lambda t}dt$ over $\mathbb{R}$ , 
we find the equivalent solenoidal condition  written in terms of $\widehat{\bm v}={\bm \sigma}h+{\bm e}_\theta f$ as 
\begin{equation}
\label{solenoidal_vhat}
\begin{split}
	&\(i\lambda-\gamma+\tfrac{3}{2}\)h=-D_\theta f \ , \quad \text{that is,} 
	\\ &h=\frac{D_\theta f}{\gamma-\frac{3}{2}-i\lambda}\quad \ \text{for all}\ \ \lambda\ne 0\ .
\end{split}
\end{equation}
Now let us substitute \eqref{solenoidal_vhat} into \eqref{HLquotient_v}. 
Then, integration by parts in each of the numerator and the denominator in \eqref{HLquotient_v} yields
\begin{align}
\frac{\int_{\mathbb{R}^3}|\nabla {\bm v}|^2 \frac{dx}{|{\bm x}|}}{\int_{\mathbb{R}^3} \frac{|{\bm v}|^2}{|{\bm x}|^3}dx}&=
\frac{\displaystyle\iint_{\mathbb{R}\times\mathbb{S}^2}\(
\begin{array}{l}
\frac{(2+\lambda^2)|D_\theta f|^2+|\partial_\theta D_\theta f|^2}{(\gamma-\frac{3}{2})^2+\lambda^2}-4\hspace{0.1em}{\rm Re} \frac{\overline{f}\hspace{0.1em}\partial_\theta D_\theta f}{\gamma-\frac{3}{2}-i\lambda}+|D_\theta f|^2\vspace{0.5em}\\ +\lambda^2|f|^2 +|\bder_\varphi h|^2+|\bder_\varphi f|^2\end{array}
\)d\lambda\hspace{0.1em} d\sigma}
 {\iint_{\mathbb{R}\times\mathbb{S}^2}\(\frac{|D_\theta f|^2}{(\gamma-\tfrac{3}{2})^2+\lambda^2}+|f|^2\)d\lambda\hspace{0.1em} d\sigma}\notag
 \\& =\frac{\displaystyle\iint_{\mathbb{R}\times\mathbb{S}^2}\(
 \begin{array}{l}
 \overline{f(\lambda,{\bm \sigma})}\bigg(\frac{(\partial_\theta D_\theta)^2}{(\gamma-\frac{3}{2})^2+\lambda^2}-\(\frac{\lambda^2+4\gamma-4}{(\gamma-\frac{3}{2})^2+\lambda^2}+1\)\partial_\theta D_\theta+\lambda^2\bigg)f(\lambda,{\bm \sigma})\vspace{0.5em}\\ +|\bder_\varphi h|^2+|\bder_\varphi f|^2\end{array}
\)d\lambda\hspace{0.1em} d\sigma}
 {\iint_{\mathbb{R}\times\mathbb{S}^2}\overline{f(\lambda,{\bm \sigma})}\(\frac{-\partial_\theta D_\theta}{(\gamma-\frac{3}{2})^2+\lambda^2}+1\)f(\lambda,{\bm \sigma})d\lambda \hspace{0.1em}d\sigma}\notag
\\&=\frac{\iint_{\mathbb{R}\times\mathbb{S}^2}\(\overline{f}Q(\lambda,-T_\theta)f+|\bder_\varphi h|^2+|\bder_\varphi f|^2\)d\lambda\hspace{0.1em} d\sigma}{\iint_{\mathbb{R}\times\mathbb{S}^2}\overline{f}q(\lambda,-T_\theta)f d\lambda\hspace{0.1em} d\sigma}\ ,\label{HLquotient_fgh}
\end{align}
where we have introduced the second-order differential operator 
\[
	T_\theta=\partial_\theta D_\theta \ ,
\]
and where $q(\lambda,-T_\theta)$ and $Q(\lambda,-T_\theta)$ are operators defined by the polynomials in $\alpha$,
\[
\left\{
	\begin{array}{l}
	 q(\lambda,\alpha)=\dfrac{\alpha}{(\gamma-\tfrac{3}{2})^2+\lambda^2}+1\ ,\vspace{0.6em}\\
	 Q(\lambda,\alpha)=\dfrac{\alpha^2}{(\gamma-\frac{3}{2})^2+\lambda^2}+\(\dfrac{\lambda^2+4\gamma-4}{(\gamma-\tfrac{3}{2})^2+\lambda^2}+1\)\alpha
	+\lambda^2,\vspace{-0.5em}
	\end{array}\right.
\]
by putting $\alpha=-T_\theta$.
To evaluate \eqref{HLquotient_fgh}, we expand $f$ by using eigenfunctions $\{ \psi_\nu(\theta)\}_{\nu\in\mathbb{N}}\subset C_0^\infty([0,\pi])$ of $-T_\theta$ as
\[
\begin{split}
f(\lambda,\theta,\varphi)&=\sum_{\nu=1}^\infty f_\nu(\lambda,\varphi) \psi_\nu(\theta)\ \ , 
	\quad\ \text{where}\quad
	\left\{ \begin{array}{l}
	 -T_\theta  \psi_\nu=\alpha_\nu  \psi_\nu\ ,\quad \int_{\mathbb{S}^2}| \psi_\nu|^2d\sigma=1\ ,\vspace{0.2em} \\   
\quad	\alpha_\nu=\nu(\nu+1).
	\end{array}\right.
\end{split}
\]
(See Lemma \ref{spec-T} in Appendix.) 
Discarding the non-negative term $|\bder_\varphi h|^2+|\bder_\varphi f|^2$ in the right-hand side of \eqref{HLquotient_fgh}, 
we then see 
\begin{align}
	\frac{\int_{\mathbb{R}^3}|\nabla {\bm v}|^2 \frac{dx}{|{\bm x}|}}{\int_{\mathbb{R}^3}|{\bm v}|^2 \frac{dx}{|{\bm x}|^3}}
	&\ge \frac{\iint_{\mathbb{R}\times\mathbb{S}^2}\overline{f}Q(\lambda,-T_\theta)f d\lambda d\sigma}{\iint_{\mathbb{R}\times\mathbb{S}^2}\overline{f}q(\lambda,-T_\theta)f d\lambda d\sigma}\notag
	 \\&=\frac{\sum_{\nu=1}^\infty\int_{\mathbb{R}\backslash\{0\}} Q(\lambda,\alpha_\nu)|f_\nu(\lambda,\varphi)|^2d\lambda d\sigma}{\sum_{\nu=1}^\infty\int_{\mathbb{R}\backslash\{0\}}q(\lambda,\alpha_\nu)|f_\nu(\lambda,\varphi)|^2d\lambda d\sigma}\notag
	 \\&\ge \inf_{\lambda\ne0}\inf_{\nu\in\mathbb{N}}\frac{Q(\lambda,\alpha_\nu)}{q(\lambda,\alpha_\nu)}
	 =\inf_{x\ge0}\inf_{\nu\in\mathbb{N}}F_\gamma(x,\alpha_\nu),
\label{v_est}
\end{align}
where $F_\gamma$ is defined by
\[
\begin{split}
	F_\gamma(x,\alpha_\nu)
	&=\frac{Q(\sqrt{x},\alpha_\nu)}{q(\sqrt{x},\alpha_\nu)}=\frac{\dfrac{\alpha_\nu^2}{(\gamma-\frac{3}{2})^2+{x}}
	+\(\dfrac{{x}+4\gamma-4}{(\gamma-\tfrac{3}{2})^2+{x}}+1\)\alpha_\nu+{x}}{\dfrac{\alpha_\nu}{(\gamma-\tfrac{3}{2})^2+{x}}+1}
	\\& =x+\(1-\frac{4(1-\gamma)}{x+\alpha_\nu+(\gamma-\frac{3}{2})^2}\)\alpha_\nu
\end{split}
\]
for $x \ge 0$.
Here we note that $F_\gamma(x,\alpha_\nu)$ is just the same as the equation (2.36)$_{n=3}$ in \cite{Costin-Mazya}. 
As in \cite{Costin-Mazya} again, by observing that
\[
\begin{split}
\left.\begin{array}{r}
 F_\gamma(x,\alpha_\nu)\ge F(0,\alpha_\nu)=\(1-\frac{4(1-\gamma)}{\alpha_\nu+(\gamma-\frac{3}{2})^2}\)\alpha_\nu\ ,\vspace{0.7em}
   \\
       \frac{\partial F(0,\alpha_\nu)}{\partial \alpha_\nu}=1-\frac{4(1-\gamma)(\gamma-\frac{3}{2})^2}{\(\alpha_\nu+(\gamma-\frac{3}{2})^2\)^2}>0      
 \end{array}
 \right\}\qquad
 &\text{for}\ \ \gamma\le1\ ,
\\
  \frac{\partial F_\gamma(x,\alpha_\nu)}{\partial \alpha_\nu} =1+\frac{4(\gamma-1)\(x+(\gamma-\frac{3}{2})^2\)}{\(x+\alpha_\nu+(\gamma-\frac{3}{2})^2\)^2}>0
 \qquad& \text{for}\ \ \gamma>1\ ,
\end{split}
\]
we find
\begin{align}
 \inf_{x\ge 0}\inf_{\nu\in\mathbb{N}} F_\gamma(x,\alpha_\nu)&=\min_{x\ge0}F_\gamma(x,\alpha_1)\label{minF}
\\ 
 &=\left\{
 \begin{array}{ll}
    F_\gamma(0,\alpha_1)=2-\frac{8(1-\gamma)}{2+(\gamma-\frac{3}{2})^2}\ ,
				   &\text{for}\ \gamma\le1\ , \vspace{0.5em}\\\displaystyle 2+\min_{x\ge0}\(x+\tfrac{8(\gamma-1)}{x+2+(\gamma-\frac{3}{2})^2}\),
 &\text{for}\ \gamma>1\ .
 \end{array}
 \right.\label{minF_value}
\end{align}
Combining \eqref{minF} to \eqref{v_est}, we arrive at
\[
	\frac{\int_{\mathbb{R}^3}|\nabla{\bm v}|^2 \frac{dx}{|{\bm x}|}}{\int_{\mathbb{R}^3}|{\bm v}|^2 \frac{dx}{|{\bm x}|^3}}
	\ge\min_{x\ge0}F_\gamma(x,\alpha_1).
\]

To show that 
\begin{equation}
 \inf_{\substack{{\bm v}\not\equiv {\bm 0},\\{\rm div}\,{\bm u}=u_\varphi=0}}
  \frac{\int_{\mathbb{R}^3}|\nabla{\bm v}|^2 \frac{dx}{|{\bm x}|}}{\int_{\mathbb{R}^3}|{\bm v}|^2 \frac{dx}{|{\bm x}|^3}}
  =\min_{x\ge0}F_\gamma(x,\alpha_1)\ ,\label{HLquotient_swirlfree}
\end{equation}
let $\lambda_\gamma\in\mathbb{R}$ denote a value of $\lambda$ that attains the minimum of $F_\gamma(\lambda^2,\alpha_1)$. 
Define the sequence $\{{\bm v}_n:\mathbb{R}^3\to\mathbb{R}^3\}_{n\in\mathbb{N}}$ of smooth vector fields by
\[
\begin{split}
	{\bm v}_n({\bm x})&={\bm v}_n(e^t {\bm \sigma})
	=\Big(-{\bm \sigma}D_\theta+{\bm e}_\theta(\partial_t-\gamma+\tfrac{3}{2})\Big)\Big(\xi(\tfrac{t}{n})\cos(\lambda_\gamma t)\sin\theta\Big)
\end{split}
\]
for every $n\in\mathbb{N}$ , where $\xi:\mathbb{R}\to\mathbb{R}$ is an even smooth function $\not\equiv0$ with compact support on $\mathbb{R}$. 
Then, it is clear that ${\bm v}_n$ satisfies \eqref{solenoidal_v},
and so  ${\bm u}_n=\rho^{-\gamma-\frac{1}{2}}{\bm v}_n$ is certainly solenoidal with compact support on $\mathbb{R}^3\backslash\{{\bm 0}\}$. 
Also note that $\psi_1(\theta) = \sin\theta$ is the first eigenfunction of $-T_\theta = -\partial_\theta D_\theta$ associated with $\alpha_1 = 2$, see Lemma \ref{spec-T} in Appendix.
Now let us denote the radial and angular components of $\widehat{{\bm v}_n}$ respectively as $h_n$ and $f_n$. 
Then we see
\[
\begin{split}
	\widehat{\bm v_n}(\lambda,{\bm \sigma})&=\frac{1}{\sqrt{2\pi}}\int_{\mathbb{R}}e^{-i\lambda t}{\bm v_n}(e^t {\bm \sigma})dt
	={\bm \sigma}h_n(\lambda,{\bm \sigma})+{\bm e}_\theta f_n(\lambda,{\bm \sigma})
	\\&=
	 \ \frac{1}{\sqrt{2\pi}} \Big(-2{\bm \sigma}\cos\theta+{\bm e}_\theta\big(i\lambda-\gamma+\tfrac{3}{2}\big)\sin\theta\Big)\int_{\mathbb{R}}e^{-i(\lambda-\lambda_\gamma) t}\xi(\tfrac{t}{n})dt.
\end{split}
\]
This implies $\bder_\varphi h_n=\bder_\varphi f_n=0$ and
\[
	f_n(\lambda,{\bm \sigma})=
	n\big(i\lambda-\gamma+\tfrac{3}{2}\big)(\sin\theta)\,\widehat{\xi}\big(n(\lambda-\lambda_\gamma)\big)\ ,
\]
where \ $\widehat{\xi}(\lambda)=\frac{1}{\sqrt{2\pi}}\int_{\mathbb{R}}e^{-i\lambda t}\xi(t)dt$ \ for all $\lambda\in\mathbb{R}$.
Then, inserting ${\bm v}={\bm v}_n$ or $(h,f)=(h_n,f_n)$ into \eqref{HLquotient_fgh}, we find that
\[
\begin{split}
 \frac{\int_{\mathbb{R}^3}|\nabla{\bm v}_n|^2 \frac{dx}{|{\bm x}|}}{\int_{\mathbb{R}^3}|{\bm v}_n|^2 \frac{dx}{|{\bm x}|^3}}
 &=\frac{\iint_{\mathbb{R}\times\mathbb{S}^2}\overline{f_n}Q(\lambda,-T_\theta)f_nd\lambda\hspace{0.1em} d\sigma}{\iint_{\mathbb{R}\times\mathbb{S}^2}\overline{f_n}q(\lambda,-T_\theta)f_n d\lambda \hspace{0.1em}d\sigma}
 \\&=\frac{\int_{\mathbb{R}}Q(\lambda,\alpha_1)\big(\lambda^2+(\gamma-\frac{3}{2})^2\big)\big|\widehat{\xi}\big(n(\lambda-\lambda_\gamma)\big)\big|^2d\lambda}{\int_{\mathbb{R}}q(\lambda,\alpha_1)\big(\lambda^2+(\gamma-\frac{3}{2})^2\big)\big|\widehat{\xi}\big(n(\lambda-\lambda_\gamma)\big)\big|^2d\lambda}
 \\&= \frac{\int_{\mathbb{R}}Q(\lambda_\gamma+\frac{\lambda}{n},\alpha_1)\big((\frac{\lambda}{n})^2+(\gamma-\frac{3}{2})^2\big)|\widehat{\xi}(\lambda)|^2d\lambda}{\int_{\mathbb{R}}q(\lambda_\gamma+\frac{\lambda}{n},\alpha_1)\big((\frac{\lambda}{n})^2+(\gamma-\frac{3}{2})^2\big)|\widehat{\xi}(\lambda)|^2d\lambda}
 \\&\hspace{-0.7em} \underset{\strut(n\to\infty)}{\longrightarrow}\ \ \lim_{|\lambda|\searrow+0}\frac{Q(\lambda_\gamma+\lambda,\alpha_1)}{q(\lambda_\gamma+\lambda,\alpha_1)}=F_\gamma(\lambda_\gamma^2,\alpha_1)=\min_{\lambda\in\mathbb{R}}F_\gamma(\lambda^2,\alpha_1)\ .
\end{split}
\]
Therefore, $\{{\bm v}_n\}_{n\in\mathbb{N}}$ is certainly a minimizing sequence for the ${\bm v}$-part of the Hardy-Leray quotient, which completes the proof of equation \eqref{HLquotient_swirlfree}.

Returning to \eqref{minF_value}, we compute the minimum value of $F_\gamma(x,\alpha_1)=2+x+\tfrac{8(\gamma-1)}{x+2+(\gamma-\frac{3}{2})^2}$  for $\gamma>1$ :  
by differentiation of this equation, we have
\[
\begin{split}
	 \frac{ \partial}{\partial x} F_\gamma(x,\alpha_1)&=\frac{G_\gamma(x)}{{{\( x+2+\(\gamma-\frac{3}{2}\)^2\) }^{2}}}\ ,
\end{split}
\]
where $G_\gamma(x)=\(x+2+\(\gamma-\frac{3}{2}\)^2\)^2-8(\gamma-1)$ . 
It is easy to check that the quadratic function $G_\gamma$ has the two roots $ x_\gamma^{\pm}=-2-\(\gamma-\frac{3}{2}\)^2\pm 2\sqrt{2}\sqrt{\gamma-1}$ . 
By numerical calculation, they satisfy
\[
\begin{cases}
 x_\gamma^-<0<x_\gamma^+<0,\quad
 &\text{if \ }\tfrac{3}{2}<\gamma<\gamma_0=
 {\textstyle\frac{3}{2}+(4+\frac{4\sqrt{31}}{3^{3/2}})^{\frac{1}{3}}-\frac{4}{3\(4+\frac{4\sqrt{31}}{3^{3/2}}\)^{\frac{1}{3}}}}\ ,
 \\
 x_\gamma^-<x_\gamma^+\le 0,&\text{otherwise.}
\end{cases}
\]
Thus it turns out that 
\[\min_{x\ge0}F_\gamma(x,\alpha_1)=   \begin{cases}
F_\gamma(x_\gamma^+,\alpha_1)=  4\sqrt{2}\sqrt{\gamma-1}-\(\gamma-\frac{3}{2}\)^2,&\text{for }\
    \frac{3}{2}\le\gamma
    \le\gamma_0\ ,\vspace{0.6em}
    \\
 F_\gamma(0,\alpha_1)= 2+  \frac{8(\gamma-1)}{2 + \(\gamma - \frac{3}{2}\)^2}
				      \ , \quad & \text{otherwise}. \end{cases}
\]
Now the computation of \eqref{minF_value} is done. Finally, combining this to \eqref{HLquotient_swirlfree}=\eqref{minF_value} and returning to \eqref{quotient_BV}, we arrive at:
\[
\begin{split}
\inf_{\substack{{\bm u}\not\equiv {\bm 0},\\{\rm div}\,{\bm u}=u_\varphi=0}}\frac{\int_{\mathbb{R}^3}|\nabla {\bm u}|^2|{\bm x}|^{2\gamma}dx}{\int_{\mathbb{R}^3}|{\bm u}|^2|{\bm x}|^{2\gamma-2}dx}
   &=\begin{cases}
(\gamma+\tfrac{1}{2})^2+F_\gamma(x_\gamma^+,\alpha_1)=\(2\sqrt{\gamma-1}+\sqrt{2}\ \)^2
				       ,&\text{for }\
    \frac{3}{2}\le\gamma
    \le\gamma_0\ ,\vspace{0.5em}
    \\
(\gamma+\tfrac{1}{2})^2+ F_\gamma(0,\alpha_1)=  \( \gamma + \frac{1}{2}\)^2 \frac{4 + \(\gamma - \frac{3}{2} \)^2}{2 + \(\gamma - \frac{3}{2}\)^2}
				      \ , \quad & \text{otherwise}, \end{cases}
   \\&=C_{\gamma,0}\ ,
\end{split}
\]
which completes the proof of Theorem \ref{Hamamoto-Takahashi} for ${\bm u}_\varphi\equiv {\bm 0}$.

\subsection{The case ${\bm u}-{\bm u}_\varphi\equiv {\bm 0}$}
\label{swirl_perp_free}

In this case,  ${\bm u}={\bm u}_\varphi \not\equiv {\bm 0}$ is an axisymmetric swirl field by assumption.
This also implies that ${\bm v}-{\bm v}_\varphi\equiv {\bm 0}$, and that ${\bm v}={\bm v}_\varphi \not\equiv {\bm 0}$ is also axisymmetric swirl.
By Lemma \ref{L2_nabla}, the Hardy-Leray quotient for ${\bm v}={\bm v}_\varphi$ with weight $-1/2$ is estimated from below as
\[
\begin{split}
 \frac{\int_{\mathbb{R}^3}|\nabla {\bm v}|^2 \frac{dx}{|{\bm x}|}}{\int_{\mathbb{R}^3}|{\bm v}|^2\frac{dx}{|{\bm x}|^3}}
	&=\frac{\int_{\mathbb{R}^3}|\nabla {\bm v}_\varphi|^2 \frac{dx}{|{\bm x}|}}{\int_{\mathbb{R}^3}|{\bm v}_\varphi|^2\frac{dx}{|{\bm x}|^3}}=\frac{\iint_{\mathbb{R}_+\times\mathbb{S}^2}\((\rho\partial_\rho v_\varphi)^2+(D_\theta v_\varphi)^2 \)\frac{d\rho}{\rho}d\sigma}{\iint_{\mathbb{R}_+\times\mathbb{S}^2}{ v}_\varphi^2\frac{d\rho}{\rho}d\sigma} 
	\\&\ge \frac{\iint_{\mathbb{R}_+\times\mathbb{S}^2}(D_\theta v_\varphi)^2 \frac{d\rho}{\rho}d\sigma}{\iint_{\mathbb{R}_+\times\mathbb{S}^2}{ v}_\varphi^2\frac{d\rho}{\rho}d\sigma} 
	=\frac{\iint_{\mathbb{R}_+\times\mathbb{S}^2}v_\varphi(-T_\theta)v_\varphi \frac{d\rho}{\rho}d\sigma}{\iint_{\mathbb{R}_+\times\mathbb{S}^2}{ v}_\varphi^2\frac{d\rho}{\rho}d\sigma}\ge\alpha_1\ .
\end{split}
\]

To see the infimum of the left-hand side among such ${\bm v}$ is equal to the right-hand side, 
we choose a sequence of axisymmetric swirl fields $\{{\bm v}_n\}_{n\in\mathbb{N}}$  as
\[
	 {\bm v}_n(e^t {\bm \sigma})={\bm e}_\varphi\xi(t/n)\sin\theta\ ,
\]
where  $\xi:\mathbb{R}\to\mathbb{R}$ is a smooth function $\not \equiv 0$ with compact support. 
Then it is easy to check that
\[\frac{\int_{\mathbb{R}^3}|\nabla {\bm v}_n|^2 \frac{dx}{|{\bm x}|}}{\int_{\mathbb{R}^3}|{\bm v}_n|^2\frac{dx}{|{\bm x}|^3}}
	\longrightarrow\alpha_1\quad {\rm as}\ \ n\to\infty\ .
\]
Therefore we have
\[
 \inf_{\substack{{\bm v}={\bm v}_\varphi\not\equiv {\bm 0},\\ \partial_\varphi v_\varphi= 0} }\frac{\int_{\mathbb{R}^3}|\nabla {\bm v}|^2 \frac{dx}{|{\bm x}|}}{\int_{\mathbb{R}^3}|{\bm v}|^2\frac{dx}{|{\bm x}|^3}}=\alpha_1=2\ .
\]
Returning to \eqref{quotient_BV}, we have the inequality \eqref{HL_swirl} for any axisymmetric ${\bm u}_\varphi$ with the optimal constant $(\gamma+\frac{1}{2})^2+2$.

\subsection{The case ${\bm u} \in \mathcal{G}$.}

In this case, the swirl part ${\bm u}_\varphi = {\bm g} = g {\bm e}_{\varphi}\in\mathcal{D}_\gamma(\mathbb{R}^3)^3$ is non-zero and axisymmetric. 
We may assume ${\bm u}-{\bm u}_\varphi\not\equiv {\bm 0}$  by the results in the former subsections.
By Lemma \ref{L2_nabla}, we can split the Hardy-Leray quotient for ${\bm u}$ into the swirl and the non-swirl parts:
\begin{align}
	\frac{\int_{\mathbb{R}^3}|\nabla {\bm u}|^2 |{\bm x}|^{2\gamma}dx}{\int_{\mathbb{R}^3}|{\bm u}|^2|{\bm x}|^{2\gamma-2}dx}
	&=\frac{\int_{\mathbb{R}^3}|\nabla ({\bm u}-{\bm u}_\varphi)|^2 |{\bm x}|^{2\gamma}dx+\int_{\mathbb{R}^3}|\nabla {\bm u}_\varphi|^2 |{\bm x}|^{2\gamma}dx}{\int_{\mathbb{R}^3}|{\bm u}-{\bm u}_\varphi|^2|{\bm x}|^{2\gamma-2}dx+\int_{\mathbb{R}^3}|{\bm u}_\varphi|^2|{\bm x}|^{2\gamma-2}dx}
 \notag
 \\&=\frac{\int_{\mathbb{R}^3}|\nabla ({\bm u}-{\bm u}_\varphi)|^2 |{\bm x}|^{2\gamma}dx+\int_{\mathbb{R}^3}|\nabla {\bm g}|^2 |{\bm x}|^{2\gamma}dx}{\int_{\mathbb{R}^3}|{\bm u}-{\bm u}_\varphi|^2|{\bm x}|^{2\gamma-2}dx+\int_{\mathbb{R}^3}|{\bm g}|^2|{\bm x}|^{2\gamma-2}dx}
 \notag
	 \\&\ge\min\left\{\frac{\int_{\mathbb{R}^3}|\nabla ({\bm u}-{\bm u}_\varphi)|^2 |{\bm x}|^{2\gamma}dx}{\int_{\mathbb{R}^3}|{\bm u}-{\bm u}_\varphi|^2|{\bm x}|^{2\gamma-2}dx}\ ,
	\ \frac{\int_{\mathbb{R}^3}|\nabla {\bm g}|^2 |{\bm x}|^{2\gamma}dx}{\int_{\mathbb{R}^3}|{\bm g}|^2|{\bm x}|^{2\gamma-2}dx}\right\}\notag
	\\& \ge\min\left\{C_{\gamma,0}\ ,
 \ \frac{\int_{\mathbb{R}^3}|\nabla {\bm g}|^2 |{\bm x}|^{2\gamma}dx}{\int_{\mathbb{R}^3}|{\bm g}|^2|{\bm x}|^{2\gamma-2}dx}\right\}=C_{\gamma,g}\ ,
 \label{HLquotient_split}
\end{align}
where the last inequality follows from the result in subsection \ref{swirl_free}, since ${\bm u}-{\bm u}_\varphi$ is swirl-free and solenoidal by \eqref{div_free}. 

To see that the infimum of the left-hand side of \eqref{HLquotient_split} among $\mathcal{G}$ is equal to the right-hand side,
we choose a sequence $\{\tilde{{\bm u}}_n\}_{{n\in\mathbb{N}}}$ of solenoidal and swirl-free fields such that
\[
	\int_{\mathbb{R}^3}|\tilde{{\bm u}}_n|^2|{\bm x}|^{2\gamma-2}dx=1\quad \text{and}\quad
	\int_{\mathbb{R}^3}|\nabla \tilde{{\bm u}}_n|^2 |{\bm x}|^{2\gamma}dx\longrightarrow C_{\gamma,0}\quad\text{as}\ \ n\to\infty.
\]
On the other hand, since $\partial_\varphi g\equiv 0$, the vector field ${\bm g} = g {\bm e}_{\varphi}$ is also solenoidal by \eqref{divergence}.  
Then it follows that the sequence $\{{\bm u}_n=n \tilde{{\bm u}}_n+{\bm g}\}_{n\in\mathbb{N}}$ belongs to $\mathcal{G}$ and that 
$\frac{\int_{\mathbb{R}^3}|\nabla {\bm u}_n|^2 |{\bm x}|^{2\gamma}dx}{\int_{\mathbb{R}^3}|{\bm u}_n|^2|{\bm x}|^{2\gamma-2}dx}\to C_{\gamma,0}$ as $n\to\infty$. 
Consequently, we reach to the desired result.

\section{Appendix}
The second-order derivative operator $T_\theta=\partial_\theta D_\theta=\partial_\theta(\partial_\theta+\cot\theta)$ is self-adjoint in $L^2(\mathbb{S}^2)$. 
Here we specify its spectrum:

\begin{lemma}
\label{spec-T}
Let $C_0^\infty([0,\pi])=\big\{\psi\in C^\infty([0,\pi])\ ;\ \psi(0)=\psi(\pi)=0 \big\}$ and let
 $T=\frac{d}{d{\theta}}(\frac{d}{d\theta}+\cot{\theta})$ be the second-order derivative operator in $C_0^\infty([0,\pi])$. 
Then the set of eigenvalues of $-T$ is given by
\[	
	Spec(-T)=\big\{\alpha_{\nu}=\nu(\nu+1)\ ; \ \nu\in\mathbb{N}\big\}.
\]
Correspondingly, the eigenfunction of $-T$ belonging to $\alpha_\nu$ for every $\nu\in\mathbb{N}$ is given by 
\[
	\psi_\nu(\theta)=P_{\nu-1}(-\cos\theta)\sin\theta\ 
\]
(up to multiplying constant) 
 for some polynomial $P_{\nu-1}$ of degree $\nu-1$. Moreover, the sequence $\{\psi_\nu\}_{\nu\in\mathbb{N}}$ spans a complete orthogonal basis of the Hilbert space $L^2([0,\pi],\sin\theta d\theta)$.
\end{lemma}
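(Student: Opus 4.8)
The plan is to identify $-T$ with the restriction of the spherical Laplacian $-\Delta_{\mathbb{S}^2}$ to those functions on $\mathbb{S}^2$ whose azimuthal dependence is $e^{i\varphi}$, and then to read off the eigenvalues, eigenfunctions and completeness from the classical spectral theory of spherical harmonics.

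First I would record the orthogonal decomposition $L^2(\mathbb{S}^2,d\sigma)=\bigoplus_{m\in\mathbb{Z}}\mathcal{H}_m$ coming from the Fourier expansion in $\varphi$, where $\mathcal{H}_m=\{\,g(\theta)e^{im\varphi}\ :\ g\in L^2([0,\pi],\sin\theta\,d\theta)\,\}$. Each $\mathcal{H}_m$ is closed, and since $\Delta_{\mathbb{S}^2}=\tfrac{1}{\sin\theta}\partial_\theta(\sin\theta\,\partial_\theta)+\tfrac{1}{(\sin\theta)^2}\partial_\varphi^2$ commutes with $\partial_\varphi$, it leaves each $\mathcal{H}_m$ invariant. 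A one-line computation shows that on $\mathcal{H}_1$ one has $\Delta_{\mathbb{S}^2}(g e^{i\varphi})=\big(g''+\cot\theta\,g'-(\sin\theta)^{-2}g\big)e^{i\varphi}=(Tg)e^{i\varphi}$, so the unitary map $g\mapsto\tfrac{1}{\sqrt{2\pi}}g(\theta)e^{i\varphi}$ conjugates $-T$, taken on its natural maximal $L^2(\sin\theta\,d\theta)$-domain, to $-\Delta_{\mathbb{S}^2}|_{\mathcal{H}_1}$. The boundary condition $\psi(0)=\psi(\pi)=0$ is exactly the correct one here: since $(\sin\theta)^{-2}\sim\theta^{-2}$ near $\theta=0$, a smooth $\psi$ with $\psi(0)\neq0$ has $(\sin\theta)^{-2}\psi\notin L^2(\sin\theta\,d\theta)$ near $0$, so $-T$ on $C_0^\infty([0,\pi])$ is symmetric and its closure is the self-adjoint operator that occurs on the sphere.

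Next I would invoke the standard resolution $-\Delta_{\mathbb{S}^2}=\sum_{n\ge0}n(n+1)\,\Pi_n$, where $\Pi_n$ is the orthogonal projection onto the $(2n+1)$-dimensional space $\mathcal{Y}_n$ of degree-$n$ spherical harmonics with orthonormal basis $\{Y_n^m:-n\le m\le n\}$, together with the completeness of $\bigcup_{n\ge0}\{Y_n^m\}$ in $L^2(\mathbb{S}^2)$. Intersecting with $\mathcal{H}_1$: the only degree-$n$ harmonic with azimuthal dependence $e^{i\varphi}$ is a multiple of $Y_n^1$, and this exists precisely for $n\ge1$. Hence $Spec(-T)=\{\alpha_\nu=\nu(\nu+1):\nu\in\mathbb{N}\}$, each eigenvalue simple, with eigenfunction $\psi_\nu(\theta)$ proportional to the associated Legendre function $P_\nu^1(\cos\theta)$; and since, for any $F=g(\theta)e^{i\varphi}\in L^2$, $\varphi$-orthogonality kills all but the $m=1$ terms in the expansion of $F$ in $\{Y_n^m\}$, the family $\{P_\nu^1(\cos\theta)\}_{\nu\in\mathbb{N}}$ is complete and orthogonal in $L^2([0,\pi],\sin\theta\,d\theta)$. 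Finally, from $P_\nu^1(x)=-\sqrt{1-x^2}\,\tfrac{d}{dx}P_\nu(x)$, where $P_\nu$ is the Legendre polynomial of degree $\nu$, one gets $\psi_\nu(\theta)=-\sin\theta\,P_\nu'(\cos\theta)$, which is $\sin\theta$ multiplied by a polynomial of degree $\nu-1$ in $\cos\theta$; rewriting that polynomial as a function of $-\cos\theta$ (equivalently using the symmetry $\theta\mapsto\pi-\theta$) puts it in the stated form $\psi_\nu(\theta)=P_{\nu-1}(-\cos\theta)\sin\theta$ with $\deg P_{\nu-1}=\nu-1$.

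An alternative, essentially self-contained route avoids spherical harmonics: the substitution $x=\cos\theta$ turns $-T\psi=\alpha\psi$ into the associated Legendre equation $(1-x^2)\psi''-2x\psi'-\tfrac{\psi}{1-x^2}+\alpha\psi=0$; a Frobenius analysis at the regular singular points shows the local solutions behave like $(1-x)^{\pm1/2}$ near $x=1$ and $(1+x)^{\pm1/2}$ near $x=-1$, so the requirement $\psi\in L^2(\sin\theta\,d\theta)$ forces the subdominant branch at both poles, which is simultaneously possible only when $\alpha=\nu(\nu+1)$, $\nu\in\mathbb{N}$, the solution then being the polynomial-type one $P_\nu^1$. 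Completeness of the eigenfunctions would then follow from the singular Sturm--Liouville theory, both endpoints being in the limit-point case (again because the weight $\sin\theta$ does not repair the $\theta^{-2}$ singularity of the potential). I expect the point needing the most care, in either approach, to be exactly this completeness together with the assertion that there is no further spectrum; routing it through spherical harmonics makes it immediate at the price of importing that standard theory, so I would take the first route and treat the identity $\Delta_{\mathbb{S}^2}(ge^{i\varphi})=(Tg)e^{i\varphi}$ and the bookkeeping of $\mathcal{Y}_n\cap\mathcal{H}_1$ as the only steps carrying real content.
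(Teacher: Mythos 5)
Your proposal is correct, but it takes a genuinely different route from the paper. The paper substitutes $\psi(\theta)=\phi(\theta)\sin\theta$ directly, computes $T\psi=(\sin\theta)\big((\partial_\theta+3\cot\theta)\partial_\theta-2\big)\phi$, and after the change of variable $x=-\cos\theta$ arrives at the Gegenbauer equation $(1-x^2)\phi''-4x\phi'+(\alpha-2)\phi=0$, whose polynomial solutions (the ``Legendre polynomials in $5$ dimensions'', given by a Rodrigues-type formula and cited from the literature) yield $\alpha_\nu=\nu(\nu+1)$; completeness is then argued separately via the Weierstrass approximation theorem applied to $\phi$. You instead realize $-T$ as $-\Delta_{\mathbb{S}^2}$ restricted to the azimuthal mode $m=1$ --- your identity $\Delta_{\mathbb{S}^2}(ge^{i\varphi})=(Tg)e^{i\varphi}$ checks out, since $Tg=g''+\cot\theta\,g'-(\sin\theta)^{-2}g$ --- and then read off eigenvalues, simplicity, eigenfunctions $P_\nu^1(\cos\theta)=-\sin\theta\,P_\nu'(\cos\theta)$, and completeness from the standard spherical-harmonic decomposition. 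The two answers agree, as they must: $P_\nu'$ is proportional to the Gegenbauer polynomial $C^{3/2}_{\nu-1}$, which is the paper's $P_{\nu-1}$ up to normalization and the reflection $x\mapsto-x$. Your route buys the completeness and the absence of further spectrum essentially for free (and is conceptually well matched to the paper, where $T_\theta=\partial_\theta D_\theta$ arises precisely from the spherical gradient acting on the moving frame), at the price of importing the full $L^2(\mathbb{S}^2)$ theory; the paper's route is more computational and self-contained at the ODE level but must supply completeness by a separate density argument. Your side remarks (the boundary condition being forced by $L^2$-integrability of $(\sin\theta)^{-2}\psi$, the limit-point classification in the Frobenius alternative) are accurate and, as you anticipated, are exactly where the care is needed; routing through spherical harmonics disposes of them cleanly.
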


\begin{proof}
Let $\psi\in C_0^\infty([0,\pi])$ and put  $\psi(\theta)=\phi(\theta)\sin\theta$. Then we can see the function $\phi:[0,\pi]\to\mathbb{R}$ is smooth in $(0,\pi)$ and continuous on $[0,\pi]$, that is, \[\phi\in C^\infty((0,\pi))\cap C([0,\pi]).\] 
 Also, abbreviating as $\partial_\theta=\frac{d}{d\theta}$, we have
\[\begin{split}
   T\psi&=\partial_\theta\big(\partial_{\theta}+\cot{\theta}\big)(\phi\sin{\theta})
   \\&=(\sin{\theta})\big((\partial_\theta+3\cot\theta)\partial_\theta-2\big)\phi\ .
  \end{split}
\]
 Then the eigenequation $-T\psi=\alpha\psi$ for $\alpha\in\mathbb{R}$ is reduced to
\begin{equation}
\label{eq:-dtheta}
- (\partial_\theta+3\cot\theta)\partial_\theta\phi=(\alpha-2)\phi\ .
\end{equation}
We now transform the variable $\theta$ into $x=-\cos\theta\in[-1,1]$, whose differential obeys the chain rule $\partial_\theta=(\sin\theta)\partial_x$ . Then the derivative operator in the left-hand side of \eqref{eq:-dtheta} is written as
\[
 \begin{split}
  \big(\partial_\theta+3\cot\theta\big)\partial_\theta&=\big(\partial_\theta+3\cot\theta\big)(\sin\theta)\partial_x
  \\&=(\cos\theta)\partial_x+(\sin\theta)\partial_\theta\partial_x+3(\cos\theta)\partial_x
  \\&=(1-x^2)\partial_x^2-4x\partial_x\ .
 \end{split}
\]
Hence equation $\eqref{eq:-dtheta}$ is transformed into
\begin{equation}
 \label{eq:Legendre}
   (1-x^2)\partial_x^2\phi-4x\partial_x\phi+(\alpha-2)\phi=0\ .
\end{equation} 
The solutions of this eigenequation are known to be given by the $5$-dimensional Legendre Polynomials $\{P_{\nu-1}\}_{\nu\in\mathbb{N}}$ : 
\[
 P_{\nu-1}(x)=(1-x^2)^{-1}\Big(\frac{d}{dx}\Big)^{\nu-1}(1-x^2)^{\nu},
\]
 with eigenvalue $\alpha=\alpha_\nu=\nu(\nu+1)$ for each $\nu$. 
 (See, e.g. \cite{Frye}.) 
 Consequently, the $\nu$-th eigenfunction of $-T$ is given by $\psi_\nu(\theta)=P_{\nu-1}(-\cos\theta)\sin\theta$.

 By the Weierstrass approximation theorem, the sequence \[\{\psi_\nu(\theta)=P_{\nu-1}(-\cos\theta)\sin\theta\}_{\nu\in\mathbb{N}}\] spans a dense subspace of $C_0^\infty([0,\pi])$ with respect to the topology of uniform convergence, since every $\psi\in C_0^\infty([0,\pi])$ is expressed as $\psi(\theta)=\phi(\theta)\sin\theta$ for some $\phi\in C([0,\pi])$. Additionally, it is well-known that $C_0^\infty([0,\pi])$ is a dense subspace of $L^2([0,\pi],d\theta)$. Therefore, we obtain \[{\rm span}\{\psi_\nu\}_{\nu\in\mathbb{N}}\underset{\rm dense}{\subset} L^2([0,\pi],d\theta).\]
This holds also with respect to the measure $\sin\theta d\theta$,
 which concludes the lemma.
\end{proof}

\end{document}